\DeclareMathAlphabet{\mathpzc}{OT1}{pzc}{m}{it}
\newtheorem{propo}{Proposition}[section]
\newtheorem{theorem}[propo]{Theorem}
\newtheorem{lemma}[propo]{Lemma}
\newtheorem{corollary}[propo]{Corollary}
\newtheorem{conjecture}[propo]{Conjecture}
\newtheorem{fact}[propo]{Fact}
\newtheorem{definition}[propo]{Definition}
\newtheorem{remark}[propo]{Remark}
\def\codim{\operatorname{codim}}
\def\dim{\operatorname{dim}}
\def\vol{\operatorname{vol}}
\def\mean{\operatorname{\mathbb{E}}}
\def\span{\operatorname{span}}
\def\dist{\operatorname{dist}}
\def\ker{\operatorname{Ker}}
\def\normal{{\sf N}}
\def\I{{\sf I}}
\def\reals{\mathbb{R}}
\def\ve{\epsilon}
\def\vr{\operatorname{vr}}
\def\sign{\operatorname{sign}}
\def\F{\mathcal{F}}
\def\H{\mathcal{H}}
\def\r{{\rm rad}}
\def\size#1{|#1|}
\def\P{\mathcal{P}}
\def\E{\mathbb{E}}
\long\def\commented#1{{}}
\def\note#1{\textbf{[[#1]]}}
\date{\today}
\title{The minimax risk of truncated series estimators for\\
symmetric convex polytopes}
\author{Adel Javanmard\thanks{Department of Electrical Engineering, Stanford University, CA. The work was partially done during an internship at Microsoft Research.} \and 
Li Zhang\thanks{Microsoft Research Silicon Valley, Mountain View, CA}}
\begin{document}

\maketitle

\begin{abstract}
We study the optimality of the minimax risk of truncated series
estimators for symmetric convex polytopes. We show that the optimal
truncated series estimator is within $O(\log m)$ factor of the optimal
if the polytope is defined by $m$ hyperplanes.  This represents the
first such bounds towards general convex bodies.  In proving our
result, we first define a geometric quantity, called the
\emph{approximation radius}, for lower bounding the minimax risk.  We
then derive our bounds by establishing a connection between the
approximation radius and the Kolmogorov width, the quantity that
provides upper bounds for the truncated series estimator.  Besides,
our proof contains several ingredients which might be of independent
interest: 1. The notion of approximation radius depends on the volume
of the body. It is an intuitive notion and is flexible to yield strong minimax
lower bounds; 2. The connection between the approximation radius and
the Kolmogorov width is a consequence of a novel duality relationship
on the Kolmogorov width, developed by utilizing some deep results from
convex geometry~\cite{bt-87,st-89,g-95}.
\end{abstract}

\section{Introduction}\label{sec:intro}

In this paper, we study the minimax risk of estimators for symmetric
convex polytopes. We show that for a symmetric convex polytope defined
by $m$ hyperplanes, the truncated series estimator, a special type of
linear estimator, is within $O(\log m)$ factor of the optimal.

In non-parametric statistics, the minimax risk of an estimator
measures the worst case expected loss 
of the estimator for input
coming from some subset $X\subseteq\reals^n$ (see Section~\ref{sec:minimax risk} for a formal definition). Tremendous work has been
done on understanding the optimal minimax risk for
various families of $X$. But it is usually very difficult to design
the optimal estimator.  The truncated series estimator is a family of
linear estimator that simply projects an observation to a properly
chosen subspace.  Despite its simplicity, the truncated series
estimator is surprisingly powerful and is shown to be nearly optimal for
wide families of convex bodies. \cite{pinsker-80} shows that such
estimator is nearly optimal for ellipsoids.  In~\cite{dlm-90}, it is
shown that it is nearly optimal for the wider family of orthosymmetric
and quadratically convex objects, including $\ell_p$ balls for $p\geq
2$. 

In this paper, we show that the power of truncated series estimator
extends to the rich class of symmetric polytopes. Specifically, we
show that for a symmetric convex polytope defined by $m$ hyperplanes,
the truncated series estimator is within $O(\log m)$ factor of the
optimal. Previously, such results have only been obtained for
particular family of convex polytopes, such those corresponding to the
Lipschitz condition~\cite{nemirovski-98} or satisfying certain isometric conditions~\cite{rwy-11}.
As a motivating example, we discuss one application of our result in
estimating values of a Lipschitz function. 

\smallskip

\noindent{\bf Example.} One important estimation problem in the literature is the
estimation of functions satisfying certain continuity or Lipschitz
conditions from noisy measurements. Consider a univariate Lipschitz function $f:[0,1]\to
\reals$. Suppose that $x_i = f(t_i)$ for $i=1, \ldots, n$, and we have measurements $y_i$ according to the model $y_i=x_i+w_i$ for some
gaussian noise $w_i$.  Then Lipschitz condition, with constant $L$,
translates to the linear constraints:
\begin{equation}\label{eq:lip}
\mbox{$|x_{i+1} - x_i|\leq L\, |t_{i+1} - t_i|$, for $i=1, \ldots, n-1$.}
\end{equation}
 
Now, we are interested in estimating $x_i$ from $y_i$. A key
observation is that the vector $x = (x_1,\cdots,x_n)$ falls in the set
$X$, where
\begin{eqnarray}
X = \{x : |x_{i+1} - x_i| \le L\, |t_{i+1} - t_i|,\text{ for } 1\le i\le n-1\}.
\end{eqnarray}
Note that $X$ is a symmetric convex polytope.
\smallskip\\

When the sampling is uniform, i.e. $t_i = (i-1)/(n-1)$, then $X$ has a more special form of $X=\{x: |x_{i+1}-x_i| \leq L/(n-1)\}$.  In this case, previous work~\cite{nemirovski-98,tsybakov-09} has shown that the best
truncated series estimator is nearly optimal.  
As a consequence of our work, the truncated series estimator is nearly optimal (within $O(\log n)$ factor) for estimating Lipschitz function at arbitrary sample set $\{t_1,\ldots,t_n\}$. 

At the high level, the proof of our results follows a very simple
strategy. We choose a family of ``obstruction objects'' for which we
can obtain lower bounds of the minimax risk. Then we show a
``duality'' result that if $X$ does not have a good truncated series
estimator, then it will have to contain a ``large'' obstruction, and
therefore no estimator can do well on $X$.  Of course, the difficulty
is in choosing the obstruction so that we can prove the corresponding
duality result.  Some natural obstructions include 
hyper-rectangles and Euclidean balls, for which we know very tight
minimax lower bound. But they turn out to be too restrictive to allow
a strong enough duality result.  To overcome this difficulty, we
consider a broader family consisting of objects which contain a ``non-negligible'' fraction of a ``large'' 
Euclidean ball; whence we are able to establish a desired duality
relationship. 

More specifically, we first define a geometric measure for any set,
called \emph{approximation radius}, and then develop a lower bound
technique which bounds the minimax risk of any body by its
approximation radius. Intuitively, the approximation radius of an
object $X$ is the maximum radius of a ball with ``non-negligible''
volume fraction inside $X$.  By refining the technique
in~\cite{yb-99}, we can show that the minimax risk of $X$ is
asymptotically as large as that of the ball with $X$'s approximation
radius (see Theorem~\ref{thm:lowerbound}).  On the other hand, the
minimax risk of truncated series estimator is determined by the
Kolmogorov width of the object. Our bound is then derived by
establishing a connection between the Kolmogorov widths and the
approximation radius of the symmetric convex bodies (see
Theorem~\ref{thm:approxradius}).  For the connection, we first derive
a duality relationship between the Kolmogorov widths of $X$ and its
polar dual $X^\circ$ (see Theorem~\ref{thm:dual}), by utilizing some
results from convex geometry started in~\cite{bt-87}.
The Kolmogorov width of $X^\circ$ is then shown, by probabilistic
arguments, to be intimately related to the approximation radius of
$X$.

\subsection{Related work}

There is a vast body of work on the minimax estimators and it is
beyond the scope of this paper to survey all of them. We refer
to~\cite{nemirovski-98,tsybakov-09,johnstone-11} for comprehensive
survey and will describe some work most relevant to this paper.
Since we focus on the mean squared error (MSE), all the subsequent discussion is in the context of MSE.

The minimax bounds have been developed for various families of convex
bodies through intensive research in the past decades.
Asymptotically tight bounds have been proposed for convex
bodies that correspond to various continuity or energy conditions; the classes of H\"{o}lder balls, Sobolev balls, and Besov
balls. We refer to Chapter~2.8 in~\cite{tsybakov-09} for a
comprehensive recount of the references.  Despite these remarkable results, it
is still largely unknown how to compute the minimax risk for an arbitrary
convex body.  Some previous work does attempt to deal with less specific objects (see~\cite{rwy-11} and the references therein), but all the optimality results are under (fairly strong) isometric assumption about the objects. 

On the other hand, the truncated series estimator has a nice geometric
interpretation and is related to the classical Kolmogorov width of the
underlying space.  In addition to its simplicity, \cite{dlm-90}~shows that it
is asymptotically optimal for the classes of orthosymmetric and
quadratically convex objects. This includes the class of diagonally
stretched $\ell_p$ balls for $p\geq 2$. Present paper shows that the power of truncated series
estimators also extend to the family of symmetric convex polytopes, as long as
the polytope is defined by $n^{O(1)}$ hyperplanes. 

To achieve our result, we develop a lower bound technique based on a
geometric quantity which we dub approximation radius. Using
Fano's inequality and the refinement developed in~\cite{yb-99,rwy-11},
we show that the minimax risk of a convex body is lower bounded by
that of the ball with radius equal to the approximation radius of that body. Compared to the
existing lower bound techniques, such as the Bernstein bound and the
bound followed from considering the worst (typically discrete) distributions
(see~\cite{nemirovski-98,tsybakov-09} and~\cite{dj-94,djmm-11}), the
approximation radius relies on a volume estimation and is both
convenient to operate and flexible to provide strong lower bounds.

One center piece in this paper is the connection established between the
approximation radius and the Kolmogorov width. Towards this step, we use 
some results developed in Banach space geometry which was
initialized in~\cite{bt-87} for investigating the invertibility of
matrices with large ``robust'' rank and subsequently developed
by~\cite{st-89,g-95}.  In particular, we show a duality relationship
between the Kolmogorov widths of a convex body and its polar dual body.
Our result has a similar flavor to the classical duality
in~\cite{m-90} but is tighter when the dimension gap is small.

\section{Preliminaries}

\subsection{Notations and definitions}

For a vector $x = (x_1,\cdots,x_n)$ and a real number $p \geq 1$,
denote by $\|x\|_p$ the $\ell_p$-norm of $x$, and $\|x\|_{\infty} =
\max_i |x_i|$.  When $p$ is absent, it means $\ell_2$ norm. Let
$B^n_p(x,r)$ denote the $n$ dimensional $\ell_p$ ball with radius $r$ and center
$x$. Whenever the center is at the origin, it is denoted by
$B^n_p(r)$. Also, we drop the superscript $n$, whenever the dimension
is clear from the context, and suppress the argument $r$ for $r=1$.

A set $X \subset \reals^n$ is called \emph{centrally symmetric} (or
simply symmetric) if for any $x \in X$, we have $-x\in X$.  For a set
$K$, the ($\ell_2$) radius of $X$ is defined as in the
following.
\begin{eqnarray*}
\begin{split}
\r(X) &= \max_{x\in K} \|x\|.
\end{split}
\end{eqnarray*}

For $p > 0$, and $n \ge 1$, the family $\F^{m,n}_p$ is defined as
\begin{eqnarray}\label{def:F}
\F^{m,n}_p = \{X:\, X = \{x:\; |Ax|_p\leq 1\},\mbox{for $A\in \reals^{m \times n}$} \}
\end{eqnarray}

In particular, when $p=\infty$, $\F^{m,n}_\infty$ consists of
symmetric convex polytopes defined by $m$ hyperplanes. Throughout we consider bounded convex bodies. Our results
easily extend to unbounded convex bodies, but the presentation would
be cumbersome by including separate case analysis which does not add
any new insight.

\subsection{Minimax risk}
\label{sec:minimax risk}

Suppose we are given measurements of an unknown $n$-dimensional vector $x$, according to the model
\begin{eqnarray}\label{eqn:model}
y= x+ w,
\end{eqnarray}
where $w \in \reals^n$ follows the normal distribution, $w \sim \normal(0,\sigma^2 \I)$, and $x$ lies in $X$, a compact convex set in $\reals^n$. The goal of the minimax estimation problem is to estimate vector $x$, with small error loss, and to evaluate the estimator under the minimax principle. 

For any estimator $M:R^n\to R^n$, the maximum mean squared error of $M$ on $(X,\sigma)$ is defined as
\begin{eqnarray*}
R(M, X, \sigma) =  \max_{x\in X} \mean \|x-M(y)\|^2 \,,
\end{eqnarray*}
and the minimax risk of $X$ is
\begin{eqnarray*}
R(X,\sigma) = \min_{M} R(M,X,\sigma)\,.
\end{eqnarray*}

Estimators generally can be  nonlinear function. We denote by $R_L(X,\sigma)$ the minimax risk when $M$ is linear. An alternative to the linear and nonlinear estimators is the truncated series estimator~\cite{dlm-90}. Truncated series estimator is obtained using projections $M(y) = Py$, with $P^2= P$. Throughout this paper, projection always mean orthogonal projection. The minimax risk for truncated estimators is defined as 
\[R_T(X,\sigma) = \min_{P} \max_{x\in X} \|x - Py\|^2,\]
where the minimum is taken over all the linear projections. \commented{For the case that $X$ is a hypercube with side lengths $\tau_i$, $1\le i \le n$, it is straightforward to see that the optimal truncated series estimator becomes a hard thresholding estimator. In fact, in this case the estimation $M(y)$ is obtained by considering $y_i$ as the estimate of $x_i$ \emph{only} in coordinates at which $\tau_i > \sigma$ and considering $0$ as the estimate of $x_i$ in the other coordinates~\cite{dlm-90}. }
Since truncated series estimators are linear, we clearly have
\begin{eqnarray*}
R(X,\sigma) \le R_L(X,\sigma) \le R_T(X,\sigma).
\end{eqnarray*}

It turns out that the minimax risk for truncated series estimators is completely  characterized by the Kolmogorov $k$-width $d_k$ of $X$, defined as~\cite{p-84}
\begin{eqnarray*}
d_k(X) = \min_{P_k} \max_{x\in X} \|x - P_k x\|,
\end{eqnarray*}
where the minimum is taken over all $k$-dimensional projections. Then, we have

\begin{equation}\label{eqn:upperbound}
R_T(X,\sigma)= \min_{k} \, d_k(X)^2 + k \sigma^2\,.
\end{equation}

For the mean squared error considered in this paper, there is a more direct equivalent definition of the Kolmogorov $k$-width under $\ell_2$ metric.
\begin{eqnarray*}
d_k(X) = \min_{P\in\P_k} \r(P(X))\,,
\end{eqnarray*}
where $\P_k$ denotes all the $k$-codimensional (or $n-k$ dimensional) projections, and $\r(K)$
denotes the $\ell_2$ radius of $K$, defined as $\max_{x\in K} \|x\|_2$. Furthermore,
\begin{eqnarray}\label{eqn:monotone_dk}
\r(X)= d_0(X)\geq d_1(X)\geq\ldots \geq d_n(X)=0\,.
\end{eqnarray}

\subsection{Approximation radius}

We define the notion of approximation radius, a geometric measure of any convex body, which as we shall show, provides a lower bound for the minimax risk of the body.

We use $\vol(X)$ to denote the volume of $X$ and $\H^k_n$ to denote
all the $k$ dimensional subspaces in $\reals^n$.  Assume $X\subseteq \reals^n$ is a convex
body that contains the origin. For any $r>0$, the volume ratio
$\vr(X,r)$ of $X$ is defined as 
\[\vr(X,r) = \left(\frac{\vol(X\cap B^n_2(r))}{\vol(B^n_2(r))}\right)^{1/n}\,,\]

and the $k$-volume ratio $\vr_k(X,r)$ of $X$ is defined as the maximum volume
ratio over all the $k$ dimensional central cut of $X$, i.e.
\[\vr_k(X,r) = \max_{H\in\H^k_n} \vr(X\cap H, r)\,.\]

Clearly, $0\leq \vr(X,r)\leq 1$. Further,
\begin{fact}\label{fact:volumeratio}
If $X$ is convex and contains the origin, then $\vr(X,r)$, and hence
$\vr_k(X,r)$ for any $k$, is non-increasing in $r$.
\end{fact}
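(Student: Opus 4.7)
The plan is to prove the scalar statement first, then reduce the $k$-volume ratio statement to it by applying the argument inside each $k$-dimensional subspace $H$.

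For the main statement, fix $0 < r_1 \leq r_2$ and set $\lambda = r_1/r_2 \in (0,1]$. Let $C = X \cap B^n_2(r_2)$. The first step is to establish the containment
\[\lambda C \subseteq X \cap B^n_2(r_1).\]
This follows from two observations. First, $\lambda B^n_2(r_2) = B^n_2(r_1)$, so $\lambda C \subseteq B^n_2(r_1)$. Second, since $X$ is convex and contains the origin, it is star-shaped with respect to $0$, hence $\lambda X \subseteq X$; in particular $\lambda C \subseteq \lambda X \subseteq X$. Combining these gives the containment.

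Taking $n$-dimensional Lebesgue volumes yields $\lambda^n \vol(X \cap B^n_2(r_2)) \leq \vol(X \cap B^n_2(r_1))$, which on dividing by $\lambda^n \vol(B^n_2(r_2)) = \vol(B^n_2(r_1))$ gives
\[\frac{\vol(X \cap B^n_2(r_2))}{\vol(B^n_2(r_2))} \leq \frac{\vol(X \cap B^n_2(r_1))}{\vol(B^n_2(r_1))}.\]
Taking $n$-th roots yields $\vr(X,r_2) \leq \vr(X,r_1)$, proving monotonicity of $\vr(X,\cdot)$.

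For the second assertion, fix any $H \in \H^k_n$. The set $X \cap H$ is convex (intersection of convex sets) and contains the origin (since both $X$ and $H$ do), viewed as a subset of the $k$-dimensional Hilbert space $H$. The identical scaling argument, carried out inside $H$ with $k$-dimensional volume and the $k$-dimensional Euclidean ball $B^n_2(r) \cap H = B^k_2(r)$ in $H$, shows that $\vr(X \cap H, r)$ is non-increasing in $r$. Since $\vr_k(X,r) = \max_{H \in \H^k_n} \vr(X \cap H, r)$ is the pointwise maximum of a family of non-increasing functions of $r$, it is itself non-increasing. The only potential subtlety is that the maximizing $H$ might depend on $r$; this causes no issue because for any $r_1 \leq r_2$, if $H^\ast$ attains the max at $r_2$, then $\vr_k(X,r_1) \geq \vr(X\cap H^\ast, r_1) \geq \vr(X\cap H^\ast, r_2) = \vr_k(X,r_2)$. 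I do not anticipate a main obstacle; the proof is a short application of convexity and star-shapedness, and the only point requiring mild care is the dimension bookkeeping in the $k$-dimensional case.
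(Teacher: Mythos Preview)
Your proof is correct and follows essentially the same approach as the paper: both establish the containment $\lambda(X\cap B^n_2(r_2))\subseteq X\cap B^n_2(r_1)$ (equivalently, the paper's $X\cap B^n_2(cr)\subseteq c(X\cap B^n_2(r))$ with $c=1/\lambda$) via star-shapedness and then compare volumes. You spell out the $\vr_k$ case and the pointwise-maximum argument more carefully than the paper does, but there is no substantive difference in method.
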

\begin{proof}
It suffices to show for any $c>1$, $\vr(X,c\cdot r)\leq \vr(X,r)$. 
\[X \cap B^n_2(c\cdot r) = c (\frac{1}{c}X \cap B^n_2(r))\subseteq c(X\cap B^n_2(r))\,,\]
where $\frac{1}{c}X\subseteq X$ follows from the assumption that $X$ is convex and
contains the origin. Therefore $\vol(X\cap B^n_2(c\cdot r)) \leq c^n
\vol(X\cap B^n_2(r))$. The claim follows immediately from the
definition of volume ratio and the identity $\vol(B^n_2(c\cdot r)) = c^n
\vol(B^n_2(r))$.
\end{proof}

Central to lower bounding the minimax risk is the notion of
approximation radius.
\begin{definition}
For $0\leq c\leq 1$, and integer $1\le k \le n$, the $(c,k)$-\emph{approximation radius} of $X$, denoted by $z_{c,k}(X)$, is defined as the
maximum $r$ such that $\vr_k(X,r)\geq c$, i.e.
\begin{equation}\label{eq:zk_def}
z_{c,k}(X) = \sup \{r:\; \vr_k(X,r)\geq c \}\,.
\end{equation}
\end{definition}

Note that if $X$ contains the origin in its interior, then
$z_{c,k}(X)$ is always defined for $0\leq c\leq 1$.


\subsection{Polar dual of convex bodies}

The connection between the Kolmogorov width and the approximation radius is established via the polar dual of the body. We state some basic facts about the polar dual body which we will need later. 

\begin{definition}
For any $K\subset R^n$, denote by $K^\circ$ the (polar) dual set of
$K$,
\begin{eqnarray*}
K^\circ=\{ y\,|\, x\cdot y\leq 1\quad\mbox{for all $x\in K$}\}\,.
\end{eqnarray*}
If $K$ lies on a lower dimensional subspace, $K^\circ$ is understood as the dual set on the lowest dimensional subspace that contains $K$.
\end{definition}

\begin{fact}\label{fact:dual0}
If $X=\{x:\;|Ax|_\infty\leq 1\}$, then $X^\circ = A^T B_1^m$. 
\end{fact}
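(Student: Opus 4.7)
I plan to establish $X^{\circ} = A^T B_1^m$ by proving both inclusions, using H\"older duality between $\ell_1$ and $\ell_\infty$ for the easy containment and a Hahn--Banach separation argument for the reverse one. Since $X$ is assumed bounded, $A$ must have trivial kernel (otherwise $X$ would contain an entire line), so $A^T\colon \reals^m \to \reals^n$ is surjective and $A^T B_1^m$ is a compact, centrally symmetric, convex body containing the origin in its interior.

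For the containment $A^T B_1^m \subseteq X^{\circ}$: given $z \in B_1^m$ and $x \in X$, a direct computation gives
\[
x \cdot (A^T z) \;=\; (Ax) \cdot z \;\leq\; \|Ax\|_\infty \, \|z\|_1 \;\leq\; 1,
\]
so $A^T z \in X^{\circ}$ by the definition of the polar dual. This direction only uses H\"older's inequality and the definitions of $X$ and $B_1^m$.

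For the reverse containment $X^{\circ} \subseteq A^T B_1^m$, I would argue by contraposition. Fix $y \notin A^T B_1^m$. Because $A^T B_1^m$ is compact, convex, and symmetric about the origin, the Hahn--Banach separation theorem supplies $x \in \reals^n$ with
\[
x \cdot y \;>\; \sup_{z \in B_1^m} x \cdot (A^T z) \;=\; \sup_{z \in B_1^m} (Ax) \cdot z \;=\; \|Ax\|_\infty,
\]
the last equality being the standard dual characterization of the sup-norm. The supremum is nonnegative (take $z = 0$), so $x \cdot y > 0$ and in particular $x \neq 0$; triviality of $\ker A$ then forces $Ax \neq 0$. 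Rescaling to $x' = x / \|Ax\|_\infty$ places $x'$ in $X$ with $x' \cdot y > 1$, so $y \notin X^{\circ}$. The only mild pitfall is the temptation to ignore the rank condition on $A$: once one notices that boundedness of $X$ forces $A$ to have trivial kernel (making $A^T B_1^m$ full-dimensional in $\reals^n$), the separation step and the rescaling step both go through with no further subtlety.
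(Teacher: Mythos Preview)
The paper states this fact without proof, treating it as a standard identity from convex duality. Your argument is correct and complete: the inclusion $A^T B_1^m \subseteq X^\circ$ via H\"older, and the reverse via strict separation of a point from the compact convex set $A^T B_1^m$, followed by rescaling. Your use of the paper's standing boundedness assumption to force $\ker A = \{0\}$ is exactly the one subtle point, and you've identified and handled it properly; without it the rescaling step $x' = x/\|Ax\|_\infty$ could fail. (As a side remark, the argument can be made to work even when $X$ is unbounded: if the separating $x$ happened to satisfy $Ax = 0$, then $tx \in X$ for all $t$ and $(tx)\cdot y \to \infty$, so $y \notin X^\circ$ anyway. But under the paper's conventions your version is the right one.)
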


\begin{fact}\label{fact:dual}
Let $H$ be a subspace of $R^n$. Denote by $P_H$ the 
projection on $H$. Then $P_H(K^\circ) = (H\cap K)^\circ$.
\end{fact}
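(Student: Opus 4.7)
The plan is to deduce the identity from the bipolar theorem applied inside the subspace $H$, where both $P_H(K^\circ)$ and $(H\cap K)^\circ$ naturally live. I would first dispatch the easy containment $P_H(K^\circ)\subseteq (H\cap K)^\circ$ by a direct one-line calculation using the adjoint property $x\cdot(P_H y)=(P_H x)\cdot y$ together with $P_H x = x$ whenever $x\in H$: for $y\in K^\circ$ and $x\in H\cap K$ this gives $x\cdot(P_H y)=x\cdot y\le 1$, so $P_H y\in (H\cap K)^\circ$.

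For the reverse containment, rather than constructing explicit preimages, I would compute the $H$-polar of $P_H(K^\circ)$ and recognize it as $H\cap K$. Indeed, by the same adjoint identity, a vector $x\in H$ satisfies $x\cdot(P_H y)\le 1$ for all $y\in K^\circ$ iff $x\cdot y\le 1$ for all $y\in K^\circ$, which by the bipolar theorem for $K$ is exactly $x\in K$. Intersecting with the constraint $x\in H$ gives $(P_H(K^\circ))^\circ = H\cap K$, and a second application of bipolarity (now inside $H$) yields the desired equality $P_H(K^\circ) = (H\cap K)^\circ$.

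The only subtle point, which I expect to be the main (though modest) obstacle, is verifying the hypotheses needed to invoke bipolarity without a closure operation: one needs $P_H(K^\circ)$ to be closed, convex, and to contain the origin. Convexity and containment of the origin are immediate from the linearity of $P_H$ and the fact that $0\in K^\circ$. Closedness follows from compactness of $K^\circ$, which is valid whenever $0$ lies in the interior of $K$; this is the standing convention in the paper, since bounded convex bodies containing the origin in their interior are the setting throughout. Under this assumption $K^\circ$ is compact, so its continuous image $P_H(K^\circ)$ is compact and hence closed, and the two applications of bipolarity in $H$ go through cleanly.
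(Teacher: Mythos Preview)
Your proof is correct and rests on the same self-adjointness identity $x\cdot P_H y = x\cdot y$ for $x\in H$ that the paper uses. The paper organizes the argument a little differently---it proves the equivalent form $P_H(K)^\circ = H\cap K^\circ$ via a single chain of equivalences and simply asserts its equivalence to the stated identity---whereas you work directly with the stated form and are explicit about the two invocations of bipolarity and the compactness of $K^\circ$ needed to justify them; the underlying content is the same.
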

\begin{proof}
We include a proof of this fact for the sake of completeness.
We prove the different but equivalent identity $P_H(K)^\circ = H\cap
K^\circ$.  Let $m=\dim(H)$ and $H=(h_1, \ldots,
h_m)$ be an orthonormal basis of $H$. With a slight abuse of notation, we denote by $H$ the matrix in $\reals^{n\times m}$ that has $h_i$ as columns. Then $P_H(x)=HH^Tx$. Observe that for any $x\in R^n, y\in R^m$, $(HH^Tx) \cdot (Hy) = x^THH^THy = x^THy = x\cdot (Hy)$. Hence 
\begin{eqnarray*}
Hy\in P_H(K)^\circ & \Leftrightarrow &\forall x\in K, (HH^Tx) \cdot (Hy)\leq 1\\
&\Leftrightarrow&\forall x\in K, x\cdot Hy\leq 1\\
&\Leftrightarrow& Hy \in H\cap K^\circ\,.
\end{eqnarray*}
\end{proof}
%

\section{Main results}

In this paper, we are interested in the minimax risk of the truncated
series estimator for symmetric convex bodies. Define $\beta(X) =
\max_{\sigma>0} R_T(X,\sigma)/R(X,\sigma)$, and $\beta^{m,n}_p =
\max_{X\in \mathcal{F}^{m,n}_p} \beta(X)$.  Our main result is

\begin{theorem}\label{thm:linear}
If $n=\Omega(\log m)$, then $\beta^{m,n}_\infty \leq c\cdot\log m$,
where $c <  2\cdot 10^8$. Furthermore, $\beta^{m,n}_\infty=\Omega(\sqrt{\log
m/\log\log m})$.
\end{theorem}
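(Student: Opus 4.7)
The plan is to combine three ingredients developed in the preceding sections: the identity $R_T(X,\sigma) = \min_k(d_k(X)^2 + k\sigma^2)$ from~\eqref{eqn:upperbound}, the minimax lower bound of Theorem~\ref{thm:lowerbound} in terms of the approximation radius, and the connection between $d_k(X)$ and $z_{c,k}(X)$ provided by Theorem~\ref{thm:approxradius} (which in turn rests on the duality in Theorem~\ref{thm:dual}). For the upper-bound assertion, fix $X \in \F^{m,n}_\infty$ and apply Theorem~\ref{thm:approxradius}, which for a polytope defined by $m$ hyperplanes should give a bound of the shape $d_k(X) \leq C\sqrt{\log m}\cdot z_{c,k'}(X)$ for universal constants $c,C$ and an index $k'$ comparable to $k$. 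Substituting into~\eqref{eqn:upperbound} and choosing $k$ to minimize the right-hand side yields $R_T(X,\sigma) \leq O(\log m)\cdot\bigl(z_{c,k'}(X)^2 + k'\sigma^2\bigr)$, while Theorem~\ref{thm:lowerbound} asserts that $R(X,\sigma) \gtrsim \max_k\min(z_{c,k}(X)^2,\, k\sigma^2)$, the minimax risk of a Euclidean ball of radius $z_{c,k}(X)$ in dimension $k$. Aligning the two indices, the ratio $R_T/R$ collapses to $O(\log m)$.

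For the matching lower bound $\beta^{m,n}_\infty = \Omega(\sqrt{\log m/\log\log m})$, I would exhibit an explicit family $X_m \in \F^{m,n}_\infty$ realizing the claimed gap. A natural candidate is a ``random-section'' polytope $X_m = \{x : \|Ax\|_\infty \leq 1\}$ with $A \in \reals^{m\times n}$ a suitably normalized Gaussian (or $\pm 1$) matrix and $n = \Theta(\log m)$. By Fact~\ref{fact:dual0}, $X_m^\circ = A^\top B_1^m$ is a cross-polytope-like body in $\reals^n$, and standard random-matrix estimates show that its Kolmogorov widths remain of order $\sqrt{\log m/\log\log m}$ times the corresponding approximation radius. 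This forces any truncated series estimator to incur the claimed multiplicative loss, paralleling the classical sub-optimality of linear estimators on $\ell_1$ balls but packaged so that only $m$ hyperplanes are used.

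The principal obstacle is the upper bound: aligning the index $k^*$ that optimizes the truncated series risk with the index attaining the largest approximation radius on the lower-bound side. Because Theorem~\ref{thm:approxradius} passes through the polar dual (via Fact~\ref{fact:dual} and Theorem~\ref{thm:dual}), one expects a small shift in the index, which must be absorbed using the monotonicity of $d_k(X)$ from~\eqref{eqn:monotone_dk} together with the monotonicity of $z_{c,k}(X)$ in $r$ (Fact~\ref{fact:volumeratio}) and in $c$. The hypothesis $n = \Omega(\log m)$ enters here, ensuring we have enough dimensions to accommodate this shift. Finally, the explicit constant $c < 2\cdot 10^8$ is obtained by tracking, without trying to optimize, the cumulative constants from the Fano-type argument behind Theorem~\ref{thm:lowerbound}, the convex-geometric inputs of~\cite{bt-87,st-89,g-95} inherited through Theorem~\ref{thm:dual}, and the slack in the volume-ratio comparison.
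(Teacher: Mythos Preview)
Your plan for the upper bound is essentially the paper's: combine~\eqref{eqn:upperbound}, Theorem~\ref{thm:lowerbound}, and Theorems~\ref{thm:dual}--\ref{thm:approxradius}, and reconcile the indices. The paper carries this out by fixing the crossover index $k^\ast=\min\{k\ge 1: d_k(X)^2\le k\sigma^2\}$, showing $R_T(X,\sigma)\le 3\min\{d_{k^\ast-1}(X)^2,\,k^\ast\sigma^2\}$, and then applying Theorem~\ref{thm:approxradius} at $k=(k^\ast-1)/2$ followed by Theorem~\ref{thm:dual} with $\epsilon=1/2$ to feed into Theorem~\ref{thm:lowerbound}; a separate trivial case handles $k^\ast=1$. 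So your outline is right, but note that the hypothesis $n=\Omega(\log m)$ is \emph{not} what absorbs the index shift in the upper bound---the shift is handled purely by monotonicity of $d_k$ and the halving of $k^\ast$.

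The genuine gap is in your lower bound. You propose constructing a random polytope $X_m=\{x:\|Ax\|_\infty\le 1\}$ with Gaussian $A$ and arguing via Kolmogorov widths of $X_m^\circ=A^\top B_1^m$. This is both unnecessary and not clearly correct as stated: your claim that ``standard random-matrix estimates'' yield the required $\sqrt{\log m/\log\log m}$ gap between widths and approximation radius is not a standard fact, and in any case you would still need to control the \emph{nonlinear} minimax risk $R(X_m,\sigma)$ from above, which your sketch does not address. The paper's argument is far simpler and deterministic: take $X=B_1^n$, which lies in $\F^{2^n,n}_\infty$, and invoke~\cite{dj-94} (Theorem~3), which already gives $R_T(B_1^n,1/\sqrt{n})=\Omega(1)$ while $R(B_1^n,1/\sqrt{n})=O(\sqrt{\log n/n})$. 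With $m=2^n$ this is exactly $\Omega(\sqrt{\log m/\log\log m})$, and \emph{this} is where the assumption $n=\Omega(\log m)$ is used---to ensure that the $\ell_1$ ball with $2^n$ facets can be realized inside $\F^{m,n}_\infty$. You even allude to ``the classical sub-optimality of linear estimators on $\ell_1$ balls''; that example \emph{is} the lower bound, with no repackaging required.
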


The lower bound follows immediately from previous works. As shown
in~\cite{dj-94} (Theorem 3), for the unit $\ell_1$ ball $X = B^n_1$,
$R_T(X,1/\sqrt{n})=\Omega(1)$ but $R(X,1/\sqrt{n})=O(\sqrt{\log
n/n})$. Since $B^n_1\in F^{m,n}_\infty$ where $m=2^n$, we have
$\beta^{m,n}_\infty=\Omega(\sqrt{\log m/\log\log m})$ for
$n=\Omega(\log m)$.  In this paper, our main result is to provide a
nearly matching upper bound of $O(\log m)$. The upper bound is the
consequence of the following theorems: Theorem~\ref{thm:lowerbound}
lower bounds the minimax risk by the approximation radius;
Theorems~\ref{thm:dual}, \ref{thm:approxradius} together establish a lower bound
on the approximation radius by the Kolmogorov width, which in turn upper bounds the minimax risk of the truncated series estimator. We assign concrete
values to constants whenever possible. They are purely for
presentation clarity and by no means represent the best possible
constants.

\begin{theorem}\label{thm:lowerbound}
There exists a universal constant $C = 2.46\cdot 10^{-4}$ such
that for any $0<c_*\leq 1$,
\begin{equation}\label{eq:lowerbound}
R(X,\sigma)\ge C c_*^2\max_k \min\,\{z_{c_*,k}(X)^2, k \sigma^2\}\,.
\end{equation}
\end{theorem}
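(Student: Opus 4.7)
The plan is a classical Fano-type reduction: restrict the problem to a low-dimensional subspace that witnesses the approximation radius, then lower-bound the minimax risk by a hypothesis-testing argument applied to a local packing. Fix $k\in\{1,\ldots,n\}$ and set $r:=z_{c_*,k}(X)$. By definition of the approximation radius, there is a subspace $H\in\H^k_n$ with
\[
\vol_k\bigl((X\cap H)\cap B^k_2(r)\bigr)\;\ge\;c_*^k\,\vol_k(B^k_2(r)).
\]
An adversary restricting to signals in $X\cap H$ forces any estimator to incur at least the $k$-dimensional minimax risk of $X\cap H$ under Gaussian noise $\normal(0,\sigma^2\I)$ in $H$, since the observation component in $H^\perp$ is pure noise that the Bayes-optimal estimator simply discards. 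Thus $R(X,\sigma)\ge R(X\cap H,\sigma)$ and I focus on the latter.

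Next I localize at the correct scale by setting $r':=\min(r,\sigma\sqrt k)$, so that $(r')^2=\min(r^2,k\sigma^2)$. By Fact~\ref{fact:volumeratio} the volume ratio is non-increasing in $r$, hence the set $S:=(X\cap H)\cap B^k_2(r')$ still satisfies $\vol_k(S)\ge c_*^k\,\vol_k(B^k_2(r'))$. A maximal packing argument then produces points $\{x_1,\ldots,x_N\}\subseteq S$ of pairwise $\ell_2$-distance at least $\delta:=c'r'$ with
\[
N \;\ge\; \frac{\vol_k(S)}{\vol_k(B^k_2(\delta))} \;\ge\; \Bigl(\frac{c_*}{c'}\Bigr)^{\!k},
\]
where $c'$ is a small constant to be chosen below.

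Now I apply Fano's inequality to the testing problem where $J$ is uniform on $\{1,\ldots,N\}$ and $Y\mid J=j\sim\normal(x_j,\sigma^2\I)$. Because every $x_i$ lies in $B^k_2(r')$, the standard convexity bound on mutual information gives $I(J;Y)\le 2(r')^2/\sigma^2\le 2k$, and the reduction from squared-error estimation to testing yields
\[
R(X\cap H,\sigma)\;\ge\;\frac{\delta^2}{4}\left(1-\frac{I(J;Y)+\log 2}{\log N}\right).
\]
Choosing $c'$ proportional to $c_*$ (concretely, $c'=c_* e^{-c_0}$ for a universal $c_0$) forces $\log N\ge k\log(c_*/c')$ to exceed $2\bigl(I(J;Y)+\log 2\bigr)$, so the parenthesised factor is at least $1/2$ and the right-hand side is bounded below by $\delta^2/8 = \Omega\bigl(c_*^2(r')^2\bigr)=\Omega\bigl(c_*^2\min(r^2,k\sigma^2)\bigr)$. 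Maximising over $k$ produces the stated inequality with an absolute constant $C$.

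The main obstacle—and the reason the naive approach of packing directly in the full ball $B^k_2(r)$ does not suffice—is that when $r^2\gg k\sigma^2$ the KL divergences between hypotheses would be of order $r^2/\sigma^2\gg k$, overwhelming the $\log N\sim k$ term in Fano and collapsing the lower bound to something vacuous. The fix, in the spirit of the refinements in~\cite{yb-99,rwy-11}, is to shrink the ambient ball to radius $\sigma\sqrt k$: monotonicity of the volume ratio (Fact~\ref{fact:volumeratio}) preserves the $c_*^k$ volume fraction, while the KL divergences drop to $O(k)$, giving the sought balance. Scaling the packing radius $\delta$ linearly in $c_*$ is what produces the $c_*^2$ prefactor on the right-hand side, and the explicit constant $C=2.46\cdot 10^{-4}$ comes from carefully tracking the numerical factors through the Fano computation and the choice of $c_0$.
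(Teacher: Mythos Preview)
Your proposal is correct and follows essentially the same route as the paper: pass to the $k$-dimensional witnessing subspace, localize to radius $r'=\min(z_{c_*,k},\sigma\sqrt{k})$ using the monotonicity of the volume ratio (Fact~\ref{fact:volumeratio}), lower-bound the packing number by $(c_*/c')^k$ via a volume argument, and finish with Fano. The one technical difference is in how the mutual information is controlled: the paper uses the Yang--Barron refinement (Proposition~\ref{pro:Fano_mod}), bounding $I$ by $\log|N_\epsilon|+\epsilon^2/(2\sigma^2)$ with an explicit $\epsilon$-net of $Y_k$ (Lemma~\ref{lem:net}) at $\epsilon=r_k$, whereas you use the cruder pairwise-KL bound $I\le 2(r')^2/\sigma^2\le 2k$. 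Both work; the covering approach is what yields the particular optimized constant $C=2.46\cdot10^{-4}$ (via $a\approx 12.89$), so your closing claim that your computation recovers this exact value is not quite right---your optimization would produce a comparable but different absolute constant.
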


\begin{theorem}\label{thm:dual}
For any convex centrally symmetric $X\subset R^n$ and any $0\leq k\leq n$ and $0<\epsilon<1$,
\begin{eqnarray}
d_{k}(X) d_{n-(1-\epsilon)k} (X^\circ) \leq c_1 \sqrt{\frac{k}{\epsilon}}\,,
\end{eqnarray}
where $c_1 = 2/(\sqrt{2}-1)\leq 5$.
\end{theorem}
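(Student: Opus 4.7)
My plan starts by translating the bound on the dual Kolmogorov width into a geometric statement about central sections of $X$. By Fact~\ref{fact:dual}, for any subspace $H\subset\mathbb{R}^n$ we have $P_H(X^\circ)=(H\cap X)^\circ$ (polar taken inside $H$), so $r(P_H(X^\circ))=1/\rho_H(H\cap X)$, where $\rho_H$ denotes the Euclidean in-radius inside $H$. Taking the minimum over $(1-\epsilon)k$-dimensional subspaces $H$ yields
\[
 d_{n-(1-\epsilon)k}(X^\circ) \;=\; \Bigl(\max_{H:\,\dim H=(1-\epsilon)k}\rho_H(H\cap X)\Bigr)^{-1},
\]
so the inequality reduces to exhibiting an $(1-\epsilon)k$-dimensional subspace $H$ on which the central section $X\cap H$ contains a Euclidean ball of radius at least $d_k(X)\sqrt{\epsilon/k}/c_1$; equivalently, a subspace on which the Minkowski functional $\|\cdot\|_X$ is dominated by $(c_1/d_k(X))\sqrt{k/\epsilon}\cdot\|\cdot\|_2$.

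Next I would set up the problem using a witness for $d_k(X)$. Let $R:=d_k(X)$ and fix a $k$-dimensional subspace $E$ with $P_{E^\perp}(X)\subseteq R\,B_{E^\perp}$, equivalently $X\subseteq E+R\,B_2^n$. Passing to polars yields $X^\circ\supseteq E^\perp\cap(1/R)\,B_2^n$: $X^\circ$ contains a Euclidean ball of radius $1/R$ in the $(n-k)$-dimensional subspace $E^\perp$. In particular, all the ``thick'' directions of $X^\circ$ are concentrated in $E^\perp$, which (dually) gives quantitative control on the behaviour of $\|\cdot\|_X$ along directions in $E$.

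The crux of the argument is to extract the subspace $H$ of dimension $(1-\epsilon)k$ with the desired domination. Here I would invoke the convex-geometric machinery developed by Bourgain--Tzafriri~\cite{bt-87}, Szarek--Tomczak-Jaegermann~\cite{st-89}, and Giannopoulos~\cite{g-95}: a proportional restricted-invertibility or proportional Dvoretzky--Rogers-type principle. Concretely, I would construct a basis $v_1,\dots,v_k$ of $E$ from a John-type/contact-point decomposition of $P_E(X^\circ)$, ensuring that the $v_i$ are $\ell_2$-normalised and spread out relative to $\|\cdot\|_X$ in a Hilbert--Schmidt sense, then apply the proportional restricted-invertibility theorem to select a subset $\sigma\subseteq[k]$ with $|\sigma|\geq(1-\epsilon)k$ whose span $H=\operatorname{span}\{v_i:i\in\sigma\}$ realises $\sup_{u\in H\cap S_2}\|u\|_X \leq (c_1/R)\sqrt{k/\epsilon}$. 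Careful constant tracking gives $c_1=2/(\sqrt 2-1)$; the specific value of this constant is reminiscent of a geometric-series sum in $1/\sqrt 2$, suggesting that the argument proceeds by a two-step or iterative scheme in which each round costs a factor $\sqrt 2$ while surrendering a fraction of the dimension.

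The main obstacle is the third step: choosing the right basis/operator and the correct restricted-invertibility statement so that the proportional dimension $(1-\epsilon)k$ and the quantitative scaling $\sqrt{k/\epsilon}$ drop out cleanly with the sharp constant. A secondary subtlety is that the section $X\cap E$ can be much smaller than the projection $P_E X$, so one cannot simply invoke a standard Dvoretzky--Rogers statement for $X\cap E$ inside $E$; the argument must exploit the containment $X\subseteq E+R\,B_2^n$ (and its polar $X^\circ\supseteq E^\perp\cap(1/R)B_2^n$) more fully, presumably via an auxiliary operator whose singular values are linked to $R$ rather than to an in-radius of $X\cap E$.
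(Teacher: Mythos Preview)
Your reduction is exactly right: by Fact~\ref{fact:dual} the task is equivalent to producing a subspace $H$ with $\dim H\geq(1-\epsilon)k$ on which $H\cap X$ contains a Euclidean ball of radius $c\sqrt{\epsilon/k}\,d_k(X)$, and you correctly anticipate that the subspace is extracted via a restricted--invertibility/proportional result of the Bourgain--Tzafriri--Szarek--Tomczak-Jaegermann--Giannopoulos type (this is the paper's Proposition~\ref{pro:w}, applied to a ``$\delta$-wide'' system of vectors).

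The genuine gap is the construction of the vectors to which Proposition~\ref{pro:w} is applied. You propose to start from the optimal $k$-dimensional subspace $E$ witnessing $d_k(X)$ and then build a basis from a John/contact-point decomposition of $P_E(X^\circ)$. As you yourself note, this runs into the section-versus-projection obstruction: there is no a priori reason the resulting vectors lie in $X$, and the containment $X\subseteq E+R\,B_2^n$ only controls the component transversal to $E$. The paper sidesteps this entirely with a different, and rather slick, construction: take $v_1,\ldots,v_{k+1}\in X$ to be the vertices of a \emph{maximum-volume $(k+1)$-simplex contained in $X$}. Maximality immediately gives, for each $i$,
\[
\dist\bigl(v_i,\span[V\setminus\{v_i\}]\bigr)=\max_{x\in X}\dist\bigl(x,\span[V\setminus\{v_i\}]\bigr),
\]
and since the right-hand side is the radius of a projection of $X$ of codimension $k$ or $k-1$, it is at least $d_k(X)=\delta$. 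Thus $\{v_1,\ldots,v_{k+1}\}$ is $\delta$-wide \emph{and} already sits inside $X$, so after applying Proposition~\ref{pro:w} one has $H\cap X\supseteq\{\sum_{j\in\sigma}\alpha_j v_j:\sum|\alpha_j|\le 1\}\supseteq H\cap B_2\bigl(c\sqrt{\epsilon/k}\,\delta\bigr)$ with no further work. No reference to the optimal subspace $E$ is needed at all.

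Two minor points. First, your speculation about the constant is off: $c_1=2/(\sqrt 2-1)$ is simply $1/c$ where $c=(\sqrt 2-1)/2$ is the constant in Proposition~\ref{pro:w}, itself obtained from Giannopoulos's ellipsoid lemma with constant $(\sqrt 2-1)/\sqrt 2$ together with the $\sqrt{2s}$ in the definition of the set $E$ there; there is no iterative scheme. Second, the hypothesis of Proposition~\ref{pro:w} is precisely that the vectors be $\delta$-wide (pairwise distances of each vector to the span of the rest are bounded below), not a John/contact-point condition, so the maximal-simplex device is tailored exactly to what the proposition needs.
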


\begin{theorem}\label{thm:approxradius}
Let $X\in\F^{m,n}_\infty$. For any $0 < c_* \leq 0.2$ and $0<k\leq n$,
\begin{eqnarray}
z_{c_*,k}(X) \geq c_2 \sqrt{\frac{k}{\ln m}} \cdot \frac{1}{d_{n-k}(X^\circ)}\,,
\end{eqnarray}
where 
\begin{eqnarray}\label{eq:c2}
c_2 = 0.4 \sqrt{\ln(1/(2c_*))}\,.
\end{eqnarray}
\end{theorem}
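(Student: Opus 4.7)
My plan is to unfold the statement into the polar-dual picture and then lower bound $\vol_{k}((X\cap H^{*})\cap B^{k}_2(r))$ by a Gaussian probability estimate in which \v{S}id\'ak's lemma does the heavy lifting. Since $X\in\F^{m,n}_\infty$, Fact~\ref{fact:dual0} lets us write $X=\{x:|\langle a_i,x\rangle|\leq 1,\,i=1,\dots,m\}$ and $X^\circ=\operatorname{conv}\{\pm a_i\}$, and Fact~\ref{fact:dual} gives $P_H(X^\circ)=(X\cap H)^\circ$ inside $H$ for every subspace $H$. Hence $d_{n-k}(X^\circ)=\min_{\dim H=k}\r(P_H(X^\circ))=:d$, attained at some $H^{*}$. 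Writing $u_i:=P_{H^{*}}(a_i)$ we have $\max_i\|u_i\|=d$ and $X\cap H^{*}=\{x\in H^{*}:|\langle x,u_i\rangle|\leq 1,\,\forall i\}$. With $r:=c_{2}\sqrt{k/\ln m}/d$ the theorem reduces to the $k$-dimensional claim
\[
\vol_{k}\!\bigl((X\cap H^{*})\cap B^{k}_2(r)\bigr)\;\geq\;c_{*}^{\,k}\,\vol_{k}\!\bigl(B^{k}_2(r)\bigr),
\]
which is exactly $\vr_{k}(X,r)\geq c_{*}$, and so $z_{c_{*},k}(X)\geq r$.

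To prove this I would sample $g\sim\normal(0,\sigma^{2}I_{k})$ on $H^{*}\cong\reals^{k}$ with $\sigma=\gamma c_{*}r\sqrt{e/k}$ for a constant $\gamma>1$ (think $\gamma=2$). Since the Gaussian density is bounded by $(2\pi\sigma^{2})^{-k/2}$, any measurable $S$ obeys $\vol_{k}(S)\geq\prob[g\in S]\cdot(2\pi\sigma^{2})^{k/2}$; dividing by $\vol_{k}(B^{k}_2(r))$ and applying Stirling to $\Gamma(k/2+1)$ converts the target into
\[
\frac{\vol_{k}((X\cap H^{*})\cap B^{k}_2(r))}{\vol_{k}(B^{k}_2(r))}\;\geq\;\prob\!\bigl[g\in (X\cap H^{*})\cap B^{k}_2(r)\bigr]\cdot(\gamma c_{*})^{k}\sqrt{\pi k},
\]
so any constant lower bound (say $\tfrac{1}{2}$) on the Gaussian probability already beats $c_{*}^{\,k}$ once $\gamma>1$ and $k\geq 1$.

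The Gaussian probability is controlled by two standard estimates. \v{S}id\'ak's lemma applied to the symmetric slabs $A_i:=\{|\langle g,u_i\rangle|\leq 1\}$ yields
\[
\prob[g\in X\cap H^{*}]=\prob\!\Bigl[\bigcap_{i} A_i\Bigr]\;\geq\;\prod_{i=1}^{m}\prob[A_i]\;\geq\;\bigl(1-2\Phi(-1/(\sigma d))\bigr)^{m},
\]
and a Mills--ratio bound renders each factor $1-O(m^{-\beta})$ with $\beta=1/(2e\gamma^{2}c_{*}^{2}c_{2}^{2})$; the explicit value $c_{2}=0.4\sqrt{\ln(1/(2c_{*}))}$ combined with $\gamma=2$ and $c_{*}\leq 0.2$ is exactly what forces $\beta>1$, so the product is $1-o(1)$ as $m$ grows. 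A $\chi^{2}$ tail bound separately gives $\prob[\|g\|>r]\leq e^{-\Omega(k)}$, since $\sigma\sqrt{k}=\gamma c_{*}\sqrt{e}\,r<r$ in the allowed range of $c_{*}$. Combining via $\prob[A\cap B]\geq\prob[A]+\prob[B]-1$ yields the needed constant lower bound on $\prob[g\in (X\cap H^{*})\cap B^{k}_2(r)]$.

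The main obstacle is tuning the constants so that the three pieces---the \v{S}id\'ak product, the $\chi^{2}$ concentration, and the volume--density conversion---close simultaneously with the stated $c_{2}$. Degenerate regimes should be treated separately: whenever $k\leq (\ln m)/c_{2}^{2}$ one has $rd\leq 1$, so $B^{k}_2(r)\subseteq X\cap H^{*}$ automatically and $\vr_{k}(X,r)=1\geq c_{*}$; the probabilistic argument is required only in the complementary regime $k>(\ln m)/c_{2}^{2}$.
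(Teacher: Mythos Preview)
Your approach is essentially the same as the paper's: restrict to the $k$-dimensional subspace $H^*$ realizing $d_{n-k}(X^\circ)$, bound the Gaussian probability of landing in $(X\cap H^*)\cap B_2^k(r)$ by combining the slab event with the $\chi^2$-ball event via $\prob[A\cap B]\geq\prob[A]+\prob[B]-1$, and convert probability to volume through the uniform density bound. The only tactical differences are that the paper uses a union bound over the $m$ slabs (where you invoke \v{S}id\'ak, which is sharper but unnecessary here), and the paper replaces your Stirling computation by the cleaner inequality $\mu_1\geq\mu_0 e^{-r^2/2}\vol(B_2^k(r))$; your separate handling of the regime $rd\leq 1$ is a nice simplification the paper does not isolate.
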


The paper is mainly devoted to proving
Theorems~\ref{thm:lowerbound}, \ref{thm:dual}, and \ref{thm:approxradius}, which together imply
Theorem~\ref{thm:linear}. We discuss some consequences of our
results as well as some open questions at the end. 

\section{Lower bounding the minimax risk}\label{sec:lower}

In this section we prove Theorem~\ref{thm:lowerbound}. Our starting
point is from the obvious lower bound for the Euclidean ball
$B^n_2(r)$. It is well known that $R(B^n_2(r),\sigma)=\Omega(\min(r^2,
n\sigma^2))$.  We shall show that this is also true for any subset
contained in $B^n_2(r)$ with ``non-negligible'' (a fraction of
$\Omega(c^n)$ for some constant $c>0$) volume.

The proof is based on the information-theoretic bound established
in~\cite{yb-99}. In this technique, the minimax risk is lower bounded
by restricting to a maximal finite set of points $\{x_1,\cdots,x_r\}$
in $X$, separated from each other by at least an amount $\ve$ in the
loss metric. Indeed, $\ve$ is the maximum separation distance such
that the hypothesis $\{x_1,\cdots,x_r\}$ are almost
indistinguishable. The Fano inequality is then used to relate this
indistinguishability to K-L divergence.

We proceed by defining an $\ve$-net and a $\delta$-packing in a set
$S$.
\begin{definition}
A set $N_{\epsilon} \subseteq S$ is said to be an
\emph{$\epsilon$-net} for $S$ if for any $x \in S$, there exists a
$x_0 \in N_{\epsilon}$, such that $\|x- x_0\| \le \epsilon$. In
addition, a finite set $M_{\delta} \subseteq S$ is said to be an
$\delta$-packing in $S$, if for any $x,x' \in M_{\delta}$, $x \neq
x'$, we have $\|x-x'\| > \delta$.
\end{definition}

\begin{propo}\label{pro:Fano_mod}
For any set $X$, let $N_\epsilon(X)$ be any $\epsilon$-net for $X$ and
$M_\delta(X)$ be a $\delta$-packing in $X$. Then,
\begin{eqnarray}\label{eqn:Fano_mod}
R(X,\sigma) \geq \left(\frac{\delta}{2}\right)^2 \left(1-\frac{\log
\size{N_\epsilon(X)} + \frac{\epsilon^2}{2\sigma^2} + 1}{\log
\size{M_\delta(X)}}\right).
\end{eqnarray}
\end{propo}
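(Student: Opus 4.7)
The plan is to execute the classical Fano-type argument with the Yang--Barron refinement~\cite{yb-99}: reduce the estimation problem to testing which element of the packing $M_\delta$ was transmitted, upper-bound the testing error probability via the minimax risk (Markov's inequality plus the packing property), lower-bound it via Fano's inequality in terms of the mutual information $I(V;Y)$, and finally exploit the $\epsilon$-net $N_\epsilon$ to bound $I(V;Y)$ from above.

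Concretely, let $V$ be drawn uniformly from $M_\delta = \{x_1,\dots,x_r\}$ and let $Y = V + w$ with $w \sim \normal(0,\sigma^2 \I)$. Fix an arbitrary estimator $M$ and define the minimum-distance decoder $\hat V$ as a closest element of $M_\delta$ to $M(Y)$. Because the packing points are pairwise separated by more than $\delta$, the event $\{\hat V \neq V\}$ forces $\|M(Y) - V\| \geq \delta/2$, so Markov's inequality combined with $M_\delta \subseteq X$ gives $\prob(\hat V \neq V) \leq 4\,\mean\|M(Y)-V\|^2/\delta^2 \leq 4 R(M,X,\sigma)/\delta^2$. In the opposite direction, Fano's inequality in the form $\prob(\hat V \neq V) \geq 1 - (I(V;Y) + \log 2)/\log|M_\delta|$ lower bounds the same probability in terms of the mutual information between $V$ and $Y$.

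The key nontrivial ingredient is the upper bound on $I(V;Y)$ using the $\epsilon$-net. Here I would invoke the change-of-measure trick: for any distribution $Q$ on $\reals^n$, $I(V;Y) \leq \max_{v \in M_\delta} D(P_v \| Q)$, where $P_v = \normal(v,\sigma^2 \I)$. I would take $Q$ to be the uniform mixture of Gaussians centered at the net points, $Q = |N_\epsilon|^{-1} \sum_{y_0 \in N_\epsilon} P_{y_0}$. For each $v \in X$ the net supplies some $y_0$ with $\|v - y_0\| \leq \epsilon$; this gives the pointwise domination $Q \geq |N_\epsilon|^{-1} P_{y_0}$, hence $D(P_v \| Q) \leq \log |N_\epsilon| + D(P_v \| P_{y_0})$. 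Substituting the Gaussian KL identity $D(P_v \| P_{y_0}) = \|v-y_0\|^2/(2\sigma^2)$ and maximizing over $v$ yields $I(V;Y) \leq \log |N_\epsilon| + \epsilon^2/(2\sigma^2)$.

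Chaining the upper and lower bounds on $\prob(\hat V \neq V)$, solving for $R(M,X,\sigma)$, and using $\log 2 \leq 1$ produces \eqref{eqn:Fano_mod}; taking the infimum over $M$ completes the proof because the right-hand side does not depend on $M$. The only step that calls for real attention is the mutual-information estimate: one has to pick the right reference measure $Q$ and exploit the pointwise lower bound $Q \geq |N_\epsilon|^{-1} P_{y_0}$ correctly under the logarithm. The packing-plus-Markov reduction and the invocation of Fano are then essentially bookkeeping.
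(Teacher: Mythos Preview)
Your proposal is correct and follows essentially the same route as the paper's own proof: both reduce to testing on the packing via a minimum-distance decoder, invoke Fano's inequality to lower bound the testing error in terms of $I(V;Y)$, and then upper bound $I(V;Y)$ by choosing the reference measure $Q$ to be the uniform Gaussian mixture over the $\epsilon$-net and using the pointwise domination $Q \geq |N_\epsilon|^{-1}P_{y_0}$ together with the Gaussian KL formula. The only cosmetic difference is that the paper phrases the risk-to-error step via the chain $\max_j \mean\|M(y)-x_j\|^2 \ge (\delta/2)^2 \max_j \prob(\text{error}\mid u=j) \ge (\delta/2)^2 \prob(\hat V\neq V)$ rather than explicitly citing Markov on the averaged loss, and it writes Fano with $\log(r-1)$ before relaxing; your formulation $\prob(\hat V\neq V)\ge 1-(I+\log 2)/\log|M_\delta|$ is the same bound after the obvious relaxation $\log(r-1)\le\log r$.
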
 

Proposition~\ref{pro:Fano_mod} is a direct application of the bound
proved in~\cite{yb-99} (Theorem~1). For the reader's convenience, we
give the details of its derivation in Appendix~\ref{app:Fano_mod}.

Note that the strongest lower bound in Eq.~\eqref{eqn:Fano_mod} is achieved per the smallest $\ve$-net and the largest $\delta$-packing of $X$. In the following, we will develop an upper bound on the size of the smallest $\ve$-net for $X$ and a lower bound for the size of its largest $\delta$-packing.

\begin{lemma}\label{lem:net}
For any $X\subseteq \reals^n$, $r \ge \r(X)$ and $\epsilon \leq r$, there exists an $\epsilon$-net for $X$, with size at most $(3r / \epsilon)^n$. 
\end{lemma}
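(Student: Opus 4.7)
The plan is to use the standard volume-comparison argument for covering numbers. Since $r \ge \r(X)$, we have $X \subseteq B^n_2(r)$, so it suffices to exhibit an $\epsilon$-net of the claimed size inside this Euclidean ball.

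First I would take $N$ to be a maximal subset of $X$ whose points are pairwise separated by more than $\epsilon$; that is, a maximal $\epsilon$-packing in $X$ in the sense of the definition just given. By maximality, any $x\in X\setminus N$ must satisfy $\|x-x_0\|\leq \epsilon$ for some $x_0\in N$, since otherwise $N\cup\{x\}$ would still be a valid $\epsilon$-packing. Hence $N$ is automatically an $\epsilon$-net for $X$, and it remains to upper bound $|N|$.

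Next I would run the usual volume-packing estimate. Because the points of $N$ are pairwise at distance strictly more than $\epsilon$, the open balls $B^n_2(x_0,\epsilon/2)$ for $x_0\in N$ are mutually disjoint. On the other hand every $x_0\in N\subseteq X\subseteq B^n_2(r)$, so by the triangle inequality each of these balls is contained in $B^n_2(r+\epsilon/2)$. Comparing the total volumes yields
\[
|N|\cdot (\epsilon/2)^n\,\vol(B^n_2)\;\leq\;(r+\epsilon/2)^n\,\vol(B^n_2),
\]
and therefore $|N|\leq (2r/\epsilon+1)^n$. Finally, the hypothesis $\epsilon\leq r$ gives $1\leq r/\epsilon$, so $2r/\epsilon+1\leq 3r/\epsilon$, which delivers the desired bound $|N|\leq (3r/\epsilon)^n$.

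There is essentially no obstacle to overcome — this is a textbook covering-number argument. The only subtlety is aligning the strict inequality in the packing definition with the weak inequality in the $\epsilon$-net definition, which is handled cleanly by passing to a maximal packing and invoking its maximality.
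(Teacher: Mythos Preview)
Your proof is correct and follows essentially the same volume-packing argument as the paper: choose a maximal $\epsilon$-separated set, observe that the radius-$\epsilon/2$ balls around its points are disjoint and contained in $B^n_2(r+\epsilon/2)$, and compare volumes. The only cosmetic difference is that you build the maximal packing inside $X$ (so the net is literally a subset of $X$, matching the stated definition), whereas the paper builds it inside $B^n_2(r)$ and argues that an $\epsilon$-net for the ball covers $X\subseteq B^n_2(r)$; the arithmetic is identical.
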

The proof of Lemma~\ref{lem:net} is deferred to Appendix~\ref{app:net}.

\begin{lemma}\label{lem:pack}
For any $\delta>0$, there exists a $\delta$-packing $M_\delta(X)$ with size at least 
$\frac{\vol(X)}{\vol(B_2(\delta))}$. 
\end{lemma}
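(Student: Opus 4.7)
The plan is to invoke the classical volume argument: pick a maximal $\delta$-packing and show that closed balls of radius $\delta$ around the packing points cover $X$, then compare volumes.

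First I would let $M_\delta(X)=\{x_1,\ldots,x_N\}$ be a $\delta$-packing in $X$ of maximum cardinality (such a maximum exists once we restrict to $X$ compact, and for unbounded $X$ the statement is either trivial or we can exhaust $X$ by compacta). Maximality means that no point $y\in X$ can be added while preserving the property $\|y-x_i\|>\delta$ for all $i$, so for every $y\in X$ there must exist some index $i$ with $\|y-x_i\|\le\delta$. Equivalently,
\[
X\;\subseteq\;\bigcup_{i=1}^{N} B_2(x_i,\delta).
\]

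Next I would take volumes on both sides. Since $\vol(B_2(x_i,\delta))=\vol(B_2(\delta))$ (volume is translation invariant), subadditivity of Lebesgue measure gives
\[
\vol(X)\;\le\;\sum_{i=1}^{N}\vol(B_2(x_i,\delta))\;=\;N\cdot\vol(B_2(\delta)),
\]
so $N\ge \vol(X)/\vol(B_2(\delta))$, which is exactly the desired bound.

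There is no real obstacle here; the only subtlety is justifying the existence of a maximum-cardinality packing. If one is uncomfortable with this, one can instead argue by Zorn/greedy construction: build the packing incrementally, and either the process terminates (yielding a maximal finite packing to which the above covering argument applies) or produces an infinite packing, in which case the claim is vacuous (indeed, if $\vol(X)/\vol(B_2(\delta))$ is to be a meaningful finite bound, the compactness of $X$ forces termination). Either way, the volume comparison above delivers the stated lower bound on $|M_\delta(X)|$.
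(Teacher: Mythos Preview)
Your argument is correct and matches the paper's proof essentially line for line: take a maximal $\delta$-packing, observe that maximality forces $X\subseteq\bigcup_i B_2(x_i,\delta)$, and divide volumes. The only addition you make is the brief discussion of existence of a maximal packing, which the paper simply assumes.
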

We refer to Appendix~\ref{app:pack} for the proof of Lemma~\ref{lem:pack}.

We are now in position to prove Theorem~\ref{thm:lowerbound}.
\begin{proof}(Theorem~\ref{thm:lowerbound})
For any $k$ and $c_*$ consider the $k$-dimensional central cross section
$Y$ of $X$ that attains the approximation radius $z_{c_*,k}$. Let $r_k =
\min\{z_{c_*,k}(X),\sqrt{k}\sigma\}$, and $Y_k = Y\cap B_2(r_k)$. Clearly, $R(X,\sigma)\geq R(Y_k,\sigma)$, since $Y_k \subseteq X$.  We will lower bound $R(Y_k,\sigma)$ by applying Proposition~\ref{pro:Fano_mod} and Lemmas~\ref{lem:net}, \ref{lem:pack}. 

Since $\r(Y_k) \le r_k$, by Lemma~\ref{lem:net} for any $\ve \le r_k$, there exists an $\ve$-net of $Y_k$, say $N$, with $|N|\leq
(3r_k/\epsilon)^k$.  On the other hand, by Fact~\ref{fact:volumeratio}, 
$\vr_k(Y,r_k) \geq \vr_k(Y,z_{c_*,k}(X)) = c_*$. Therefore
\[\vol_k(Y_k)=\vol_k(Y\cap B_2(r_k)) = \vr_k(Y,r_k)^k \vol_k(B^k_2(r_k)) \geq c_*^k \vol_k(B^k_2(r_k))\,.\]

Combining it with Lemma~\ref{lem:pack}, there exists a $\delta$-packing of $Y_k$, say $M$, with $|M|\geq c_*^k \vol_k(B^k_2(r_k))/\vol_k(B^k_2(\delta)) = (c_*\cdot r_k/\delta)^k$. 

Choose $\delta = (c_*/a)\, r_k$, and $\ve = r_k$, where $a$ is a
constant to be determined. Using the bounds on $|N|$ and
$|M|$ in Proposition~\ref{pro:Fano_mod}, we obtain
\begin{eqnarray}\label{eq:R_lowerbound}
R(Y_k,\sigma) \ge \frac{1}{4} \left(\frac{c_*r_k}{a} \right)^2 \Big( 1- \frac{k \log 3 + \frac{r_k^2}{2\sigma^2} + 1}{k\log a}\Big)
\ge \frac{1}{4} \left(\frac{c_*r_k}{a} \right)^2 \Big(1 - \frac{\log 3 + \frac{3}{2}}{\log a} \Big).\end{eqnarray}
Maximizing the right hand side over $ a >1 $, we get $a = 12.89$. Plugging in for $a$ in Eq.~\eqref{eq:R_lowerbound}, we obtain $R(Y_k,\sigma) \ge C c_*^2 r_k^2$, with $C= 2.46 \cdot10^{-4}$. Since $1\le k\le n$ is arbitrary, we have 
\begin{eqnarray*}
R(X,\sigma) \ge \max_k R(Y_k,\sigma)\ge \max_k C c_*^2r_k^2 = C c_*^2 \max_k \min \{z_{c_*,k}(X)^2, k\sigma^2\}.
\end{eqnarray*}
\end{proof}

Invoking relation~\eqref{eqn:upperbound}
and~Theorem~\ref{thm:lowerbound}, in order to prove the near
optimality of truncated series estimators for family $\F^{m,n}_{\infty}$,
we establish some properties of the Kolmogorov width and explore its
relation to the approximation radius. Before proceeding, we make a comparison between the proposed lower bound, and the one obtained by considering the hardest rectangular sub-problem. 
\smallskip\\

\noindent {\bf Relation to the hardest rectangular sub-problem.}
One technique in the literature~\cite{dlm-90,nemirovski-98} for lower
bounding the minimax risk is to find the ``hardest'' box contained in
the body (or compute the \emph{Bernstein width}, defined as the side
length of the largest cube enclosed in the body) and apply the known
lower bound for the box.  The approximation radius can always be used
to achieve at least the same asymptotical lower bound. 

Suppose that $X$ contains a box with side lengths $\tau_1,\ldots, \tau_n$.  Then using the box bound~\cite{dlm-90}, we have that $R(X,\sigma) =  \Omega (\sum_i \tau_i^2 \sigma^2/(\tau_i^2+\sigma^2)) = \Omega(\sum_i
\min(\tau_i^2,\sigma^2))$. Now group $\tau_i$'s as follows. The first group consists of $\tau_1,\cdots,\tau_{k_1}$, where $k_1$ is the smallest index such that $\sum_{j=1}^{k_1} \min(\tau_j^2,\sigma^2) \ge \sigma^2$. The seconds group consists of $\tau_{k_1+1}, \cdots, \tau_{k_2}$, where $k_2$ is the smallest number such that $\sum_{j=k_1+1}^{k_2} \min(\tau_j^2,\sigma^2) \ge \sigma^2$, and so forth. Let $k$ be the total number of groups. Firstly, note that $\sum_{i\in I} \min(\tau_i^2,\sigma^2)$ is at most $2 \sigma^2$, for all groups $I$. Hence $k = \Omega (\sum_{i} \min(\tau_i^2/\sigma^2,1))$. Secondly, by construction, for all groups $I$ (except possibly the last one), we have $\sum_{i\in I} \min(\tau_i^2,\sigma^2) \ge \sigma^2$. Let $k'$ be the number of these groups. For each of them, we can replace the corresponding face
by its diagonal with length $\sqrt{\sum_{i\in I} \tau_i^2}\geq
\sigma$.  This way we obtain an $k'$ dimensional box with each
side length at least $\sigma$. Now it is straight forward to see that,
$z_{c,k'}(X) = \Omega(\sqrt{k'}
\sigma)$, and by Theorem~\ref{thm:lowerbound}, we get a lower bound of $\Omega(k' \sigma^2) = \Omega(k\sigma^2) = \Omega(\sum_{i} \min(\tau_i^2,\sigma^2))$. 

\section{A duality relationship for Kolmogorov widths}\label{sec:dual_kol}
We take a detour to establish the connection between the Kolmogorov
width and the approximation radius. The connection is via a novel
duality relationship between the Kolmogorov widths of $X$ and those of
its polar dual, as stated in Theorem~\ref{thm:dual}.  The proof is an
application of some celebrated works in convex
geometry~\cite{bt-87,st-89,g-95}.

\begin{definition} A set of vectors
$V=\{v_1, \cdots, v_s\}$ is called $\delta$-wide if for any $1\leq
i\leq s$, $\dist(v_i,\span[V/\{v_i\}])\geq \delta$. 
\end{definition}

The following proposition concerns an interesting property of
$\delta$-wide sets, and can be gleaned from~\cite{bt-87,st-89,g-95}. For reader's convenience, we give the proof
of this proposition in Appendix~\ref{app:w}.
\begin{propo}\label{pro:w}
For any $\delta$-wide set $V = \{v_1,\cdots,v_s\}$, there exists $\sigma \subseteq
\{1,\ldots, s\}$ with $\size{\sigma}\geq (1-\epsilon) s$ such that for
any $\alpha = (\alpha_j)_{j \in \sigma}$,
\begin{eqnarray*}
\|\sum_{j\in \sigma} \alpha_j v_j \| \geq c \sqrt{\frac{\epsilon}{s}} \delta \sum_{j\in \sigma} |\alpha_j|\,,
\end{eqnarray*}
with $c = (\sqrt{2}-1)/2$.
\end{propo}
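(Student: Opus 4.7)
The plan is to use the $\delta$-wide hypothesis to build a dual system of unit functionals and then invoke the restricted invertibility machinery of~\cite{bt-87,st-89,g-95} to extract a subset on which this dual system is well-conditioned.

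First, for each $i\in\{1,\ldots,s\}$, I would choose a unit vector $u_i$ orthogonal to $\span(V\setminus\{v_i\})$ and oriented so that $\langle u_i,v_i\rangle = \dist(v_i,\span(V\setminus\{v_i\}))\ge\delta$. By construction $\langle u_i,v_j\rangle = 0$ for $j\ne i$, so for any subset $\sigma\subseteq\{1,\ldots,s\}$ and any coefficients $(\alpha_j)_{j\in\sigma}$, the sign functional $\phi = \sum_{i\in\sigma}\sign(\alpha_i)\,u_i$ satisfies
\[
\Bigl\langle \phi,\ \sum_{j\in\sigma}\alpha_j v_j \Bigr\rangle \;=\; \sum_{i\in\sigma}|\alpha_i|\,\langle u_i,v_i\rangle \;\ge\; \delta\,\|\alpha\|_1.
\]
Cauchy--Schwarz then yields $\|\sum_{j\in\sigma}\alpha_j v_j\| \ge \delta\|\alpha\|_1/\|\phi\|$. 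Letting $U$ denote the operator whose columns are $(u_i)_{i\in\sigma}$ and using that a sign vector has $\ell_2$-norm $\sqrt{|\sigma|}$, we have $\|\phi\|\le \|U\|_{\rm op}\sqrt{|\sigma|}\le \|U\|_{\rm op}\sqrt{s}$. Hence the inequality in the proposition, with $c=(\sqrt{2}-1)/2$, reduces to producing a $\sigma$ of size at least $(1-\epsilon)s$ on which $\|U\|_{\rm op}^{2}\le 1/(c^{2}\epsilon)$.

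Second, this is exactly a Bourgain--Tzafriri-type restricted invertibility statement applied to the unit-norm family $(u_i)_{i=1}^{s}$. Because $\sum_{i}\|u_i\|^{2}=s$, the average squared contribution of a single column to $\|U\|_{\rm op}^{2}$ is $1$, and the refinements of~\cite{st-89,g-95} allow one to remove an $\epsilon$-fraction of the indices so as to drive the squared operator norm on the surviving subset down to $O(1/\epsilon)$. Tracking constants through the extraction argument of~\cite{st-89,g-95} yields precisely the bound $1/(c^{2}\epsilon)$ with $c=(\sqrt{2}-1)/2$.

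The main obstacle is exactly this last step: the quantitative restricted-invertibility extraction with the sharp constant. The standard route is an iterative probabilistic selection in which, at each stage, one either accepts the current subset or amputates a further small fraction of ``bad'' coordinates identified via a spectral threshold; the dichotomy at threshold $\sqrt{2}$ exploited in the iteration is what produces the specific constant $(\sqrt{2}-1)/2$. Once this extraction is granted, the conclusion of the proposition is immediate from the duality computation above.
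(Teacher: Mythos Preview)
Your high-level strategy --- build the biorthogonal functionals $u_i$, pair $\sum_{j\in\sigma}\alpha_j v_j$ against a sign combination of them, apply Cauchy--Schwarz, and invoke \cite{bt-87,st-89,g-95} for the extraction --- is exactly the paper's. But the reduction you make in the second step is too strong, and the operator-norm bound you claim is simply false.

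You assert that deleting an $\epsilon$-fraction of indices forces $\|U_\sigma\|_{\rm op}^2\le 1/(c^2\epsilon)$. This is not what the cited results give, and it fails even for $u_i$'s arising from a $\delta$-wide system. Take $u_i=\sqrt{1-\eta}\,e_i+\sqrt{\eta}\,w$ in $\reals^{s+1}$ with $w\perp e_1,\dots,e_s$ a fixed unit vector and $\eta$ close to $1$. These are unit vectors with $U^TU=(1-\eta)I+\eta J$, and they are precisely the biorthogonal system of the $1$-wide set $V=U(U^TU)^{-1}$ (one checks $V^TU=I$, so $\langle u_i,v_j\rangle=\delta_{ij}$ and $\dist(v_i,\span(V\setminus\{v_i\}))=\langle u_i,v_i\rangle=1$). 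Yet every principal $(1-\epsilon)s\times(1-\epsilon)s$ submatrix of $U^TU$ has top eigenvalue $1-\eta+\eta(1-\epsilon)s\approx(1-\epsilon)s$, unbounded in $s$. So no subset of the required size has bounded operator norm, even though the proposition itself holds for this $V$.

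The point you are missing is that the lemma actually proved in \cite{g-95} (and quoted in the paper) does not control $\|\sum_{i\in\sigma}\pm u_i\|$. It says that the \emph{projection} of the ellipsoid $E=\{(\delta_j)_{j=1}^s:\|\sum_j\delta_j u_j\|^2\le 2s\}$ onto the $\sigma$-coordinates contains the cube $c\sqrt{\epsilon}\,[-1,1]^\sigma$, with $c=(\sqrt{2}-1)/\sqrt{2}$. Equivalently: every sign pattern on $\sigma$ can be \emph{extended} to a full coefficient vector $(\delta_i)_{i=1}^s$ with $\|\sum_{i=1}^s\delta_i u_i\|\le\sqrt{2s}$. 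In the example above the off-$\sigma$ coefficients are used to cancel the large common component along $w$; setting them to zero, as your operator-norm reduction implicitly does, throws this away. The paper then applies Cauchy--Schwarz with the extended functional $\sum_{i=1}^s\delta_i y_i$: biorthogonality makes the extra terms invisible in the inner product with $\sum_{j\in\sigma}\alpha_j v_j$, but they are essential in keeping the norm below $\sqrt{2s}$. This produces the factor $\sqrt{2s}/(c\sqrt{\epsilon})$ and hence the constant $(\sqrt{2}-1)/2$. Your argument is repaired by replacing the operator-norm step with this projection-of-ellipsoid statement and pairing against the extended vector rather than the truncated one.
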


Now we use the above proposition to prove Theorem~\ref{thm:dual}.
\begin{proof}(Theorem~\ref{thm:dual})
Write  $\delta = d_{k}(X)$. 
Consider the $k+1$ points $V=\{v_1, \ldots, v_{k+1}\}$ inside $X$
which forms the largest $k+1$ simplex. By the maximality of the volume
of $V$, for any $1\leq i\leq k+1$,
\begin{eqnarray}\label{eqn:maximal1}
\dist(v_i , \span[V/\{v_i\}]) = \max_{x\in X} \dist(x, \span[V/\{v_i\}])\,.
\end{eqnarray}
Note that the vectors in $V$ are affinely independent, and thus $\dim(V/\{v_i\})$ is either $k$ or $k-1$. Therefore, there exists an $r$-codimensional  projection $P$ such that $\ker(P) = V/\{v_i\}$, and $r \in\{ k-1, k\}$.  
Then
\begin{eqnarray}\label{eqn:maximal2}
\dist(v_i , \span[V/\{v_i\}]) = \| P(v_i) \|\,.
\end{eqnarray}

\noindent Also, by Eq.~\eqref{eqn:maximal1}, we have 
\begin{eqnarray}
\|P(x)\| = \dist(x, \span[V/\{v_i\}]) \leq \dist(v_i, \span[V/\{v_i\}]) = \|P(v_i)\|\,,
\end{eqnarray}
for any $x\in X$. On the other hand, since $d_r(X) \ge d_k(X) = \delta$ and $X$ is centrally symmetric, there exist $x,y\in X$, such that $\|P(x)-P(y)\|\geq 2\delta$. 
Hence 
\begin{eqnarray*}
\|P(v_i)\|\geq \frac{1}{2} (\|P(x)\|+\|P(y)\|) \geq \frac{1}{2} \|P(x)-P(y)\| \geq \delta\,.
\end{eqnarray*}

\noindent Using Eq.~\eqref{eqn:maximal2}, $V$ is $\delta$-wide. By Proposition~\ref{pro:w}, there exists $\sigma \subseteq \{1,\ldots, k\}$ with $\size{\sigma}\geq (1-\epsilon) k$ such that for any $\alpha = (\alpha_j)_{j \in \sigma}$,
\begin{eqnarray}
\|\sum_{j\in \sigma} \alpha_j v_j \| \geq c \sqrt{\frac{\epsilon}{k}} \delta \sum_{j\in \sigma} |\alpha_j|\,, \quad c = \frac{\sqrt{2}-1}{2}.
\end{eqnarray}

Let $H=\span[\{v_i\,|\,i\in \sigma\}]$. We claim that
\begin{eqnarray*}
 H\cap X \supseteq H \cap B^n_2(c\sqrt{\epsilon/k}\,\delta)\,.
 \end{eqnarray*}

\noindent Consider $Y=\{\sum_{j\in \sigma} \alpha_j v_j\,|\, \sum_{j\in \sigma} |\alpha_j|\le1\}$. Since $X$ is convex and centrally symmetric, and $\{v_i\} \subseteq H\cap X$, we have $Y\subseteq H\cap X$. Hence, it suffices to show that
$H\cap B^n_2(c\sqrt{\epsilon/k}\, \delta)\subseteq Y$. 
For any given  $x\in H\cap B^n_2(c\sqrt{\epsilon/k}\, \delta)$, let $r^\ast=\max \{r: rx\in Y\}$.  Clearly, there exists $\alpha = (\alpha)_{j \in \sigma}$, such that, $r^\ast x = \sum_{j\in \sigma} \alpha_j v_j$, and $\sum_{j\in \sigma} | \alpha_j | = 1$. Hence,
\begin{eqnarray}\label{eqn:maximal3}
\| r^\ast x\| = \| \sum_{j\in\sigma} \alpha_j v_j\| \geq c\sqrt{\frac{\epsilon}{k}}\delta\sum_{j\in\sigma}|\alpha_j| = c\sqrt{\frac{\epsilon}{k}}\delta\,.
\end{eqnarray}

\noindent As $x\in H\cap B^n_2(c\sqrt{\epsilon/k}\,\delta)$, we have $\|x\| \le c\sqrt{\epsilon/k}\,\delta$ and by Eq.~\eqref{eqn:maximal3}, we obtain $r^\ast\geq 1$. Consequently, $x\in Y$. Since $x \in H\cap B^n_2(c\sqrt{\epsilon/k}\, \delta)$ was arbitrary, we have 
\begin{eqnarray}\label{eqn:maximal4}
H\cap B^n_2(c\sqrt{\epsilon/k}\, \delta)\subseteq Y \subseteq H\cap X.
\end{eqnarray} 

By Fact~\ref{fact:dual},
\begin{eqnarray*}
P_H(X^\circ) & = & (H\cap X)^\circ\\
& \subseteq & (H\cap B^n_2(c\sqrt{\epsilon/k}\,\delta))^\circ\\
& = & H\cap B_2 \left(\frac{1}{c\sqrt{\epsilon/k}\,\delta}\right)\,.
\end{eqnarray*}

\noindent Thus $\r(P_H(X^\circ))\leq 1/(c\sqrt{\epsilon/k}\,\delta)$. Note that $P_H \in \mathcal{P}_{n - \dim(H)}$. Hence,
\begin{eqnarray}\label{eqn:maximal5}
d_{n-\dim(H)}(X^\circ)  = \min_{P \in \P_{n-\dim(H)}} \r(P(X^\circ))\leq \frac{1}{c\sqrt{\epsilon/k}\,\delta}.
\end{eqnarray}

\noindent Since $\dim(H) = |\sigma| \ge (1-\epsilon)k$, recalling Eq.~\eqref{eqn:monotone_dk}, $d_{n- (1-\ve)k}(X^{\circ}) \le d_{n - \dim(H)}(X^{\circ})$. Taking $c_1=1/c = 2/(\sqrt{2}-1)$, we have 
\begin{eqnarray}\label{eqn:maximal6} 
d_k(X) d_{n-(1-\epsilon)k}(X^\circ) \leq c_1 \sqrt{\frac{k}{\epsilon}}\,.
\end{eqnarray}
\end{proof}

Before we pass to the next section, we make a few remarks about the duality relationship stated in Theorem~\ref{thm:dual}.

\begin{remark}\label{remark:tight}
The dependence on $k$ is the best possible. Consider $X=B_1^n$, the unit $\ell_1$ ball. Then $X^\circ = B_\infty^n$. It is easy to see that for any $0\leq k,k'\leq n$, $d_k(X)\geq \sqrt{1-k/n}$ and $d_{n-k'}(X^\circ) \geq \sqrt{k'}$. When $k\leq n/2$ and $k'=\Omega(k)$, we have that $d_k(X)d_{n-k'}(X^\circ) = \Omega(\sqrt{k})$.  We do not know if the dependence on $\epsilon$ is the best possible. But for the application in this paper, the dependence on $\epsilon$ is not significant as it will be chosen as a constant. 
\end{remark}

\begin{remark}\label{remark:john}
By using the maximum volume ellipsoid, it is fairly easy to show that for any $0\leq k<n$, 
\begin{eqnarray*}
d_k(X)d_{n-k-1}(X^\circ)\leq \sqrt{n}\,.
\end{eqnarray*}

Consider the maximal enclosed ellipsoid $E\subseteq X$. By John's theorem~\cite{John-48}, $E\subseteq X\subseteq \sqrt{n} E$.  Let the axes lengths of $E$ be $\lambda_1\geq \lambda_2\ldots\geq \lambda_n\geq 0$. Then $d_k(X)\leq \sqrt{n}\lambda_{k+1}$ since $X\subseteq \sqrt{n}E$. On the other hand, by duality $X^\circ \subseteq E^\circ$.  The axes lengths of $E^\circ$ are $1/\lambda_n\geq\ldots\geq 1/\lambda_2 \geq 1/\lambda_1$.  So $d_{n-k-1}(X^\circ)\leq 1/\lambda_{k+1}$. Therefore, $d_k(X)d_{n-k-1}(X^\circ)\leq \sqrt{n}$.

However, proving the stated bound requires more advanced tool (Proposition~\ref{pro:w}) .
\end{remark}

\begin{remark}
In~\cite{m-90}, a duality about Gelfand numbers are given, where Gelfand number $c_k$ is defined as
\begin{eqnarray*}
c_k(X) = \min_{H: \codim(H)=k}\r(H\cap X)\,.
\end{eqnarray*}

Observe that $c_k\leq d_k$. To put it in a comparable form,
in~\cite{m-90}, it is shown that there exists constant $D>0$, such
that for any $0<\kappa<1$, $c_k(X) c_{(1-\kappa)n-k-D}(X^\circ) = O(1/\kappa)$.
This duality relation focuses on the duality gap, i.e. the product can be upper bounded by any constant. In our case there is a factor of $\sqrt{k}$. However, the dimension gap, i.e. the difference between the dimension in one term and the co-dimension in the other term, is $\kappa n$ in this relationship. But ours is $\epsilon k$, much smaller when $k$ is small. 
 If we were to apply the duality
in~\cite{m-90}, we then need to set $\kappa = \epsilon k/n$, resulting
in a bound of $O(n/(\epsilon k))$, much larger than our bound when $k$
is small.  But the duality in~\cite{m-90} holds with high probability
for a randomly chosen subspace, while it is not true for our bound.
\end{remark}

\section{Main theorem}

In the previous section, we showed a relationship between the Kolmogorov
widths of $X$ and its polar dual $X^\circ$.  This easily
translates to a relation between the Kolmogorov width and the radius
of the largest Euclidean ball contained in $X$, which in turn
gives us a bound on the minimax risk of the truncated series estimator.
However, the bound is fairly weak due to the large duality gap of
$\sqrt{k}$.  If it were some constant in place of $\sqrt{k}$, we would
already obtain the results we search after.  Unfortunately per
Remark~\ref{remark:tight}, this dependence cannot be improved.
In this section, we show that if $X$ is defined by $m$ hyperplanes, we can scale the largest
Euclidean ball contained in $X$ by the factor of $\sqrt{k/\log m}$ such that the fraction of its volume inside $X$ is still non-negligible, despite that the scaled ball may grow outside of $X$. This gives us
the proof of Theorem~\ref{thm:approxradius}, and therefore of
Theorem~\ref{thm:linear}. 

\begin{proof}(Theorem~\ref{thm:approxradius})
Let $X$ be an arbitrary element in $\mathcal{F}^{m,n}_{\infty}$. Hence, there exists $A \in \reals^{m \times n}$ such that $X = \{x \in \reals^n:  |Ax|_{\infty} \le 1\}$.
Let $\tau = d_{n-k} (X^\circ)$. By definition of Kolmogorov widths, there exists a subspace $H$ with $\dim(H) = k$, such that $\r(P_H(X^\circ)) = \tau$.  Let $H=(h_1, \ldots, h_k)$, where $h_i$'s are any orthonormal bases of $H$. 
By Fact~\ref{fact:dual0}, $X^\circ = A^{T}B^m_1$, and $P_H(X^\circ) = HH^T A^{T} B^m_1$. As $\r(P_H(X^\circ)) = \tau$, for any $y\in P_H(X^\circ)$, $\|y\|\leq \tau$. Equivalently, for any $w\in B^m_1$, $\|H^TA^Tw\|\leq \tau$. Let $F = A H$ and write $F=(f_{ij})_{m\times k}$. By duality of matrix norms,
\begin{eqnarray*}
\max_{w\in B_1} \|F^Tw\| = \max_{1 \le i \le m} \sqrt{\sum_{j=1}^{k} f_{ij}^2}.
\end{eqnarray*}
Since $\|F^Tw\|\leq \tau$ for $w\in B^m_1$, we have $\sqrt{\sum_j f_{ij}^2} \leq \tau$, for any $1\leq i\leq m$. 

Consider a random vector $g = (g_1, \ldots, g_k)$ where $g_i$'s are i.i.d. standard gaussians. Denote by $\mu$ the probability density function of $g$, i.e., 
\begin{eqnarray*}
\mu(g) = \frac{1}{(2\pi)^{k/2}} \exp\{-\frac{1}{2} \sum_{i=1}^k g_i^2\}.
\end{eqnarray*}
Let $\mu_0 = 1/(2\pi)^{k/2}$, $r = \sqrt{2k\ln(1/(2c_*))}$, and $\mu_1 = \mathbb{P}(\|g\| \le r)$.

Using the standard tail bound for sum of random normal variables~\cite{feller-72}, for any constant $c>0$, and for any $1\le i \le m$,
\begin{eqnarray}
\mathbb{P}\Big \{ |(Fg)_i| \geq c \sqrt{(\sum_{j=1}^k f_{ij}^2) \, \ln m}\Big\} \leq m^{-c^2/2} \,.
\end{eqnarray}

Since $\sqrt{\sum_j f_{ij}^2} \leq \tau$, we obtain
\begin{eqnarray}
\mathbb{P}\Big \{ |(Fg)_i| \geq c \tau \sqrt{\ln m}\Big\} \leq m^{-c^2/2} \,.
\end{eqnarray}

Applying union bound for $1 \le i \le m$, we obtain
\begin{eqnarray}
\begin{split}
\mathbb{P}\{F g \in c \tau \sqrt{\ln m}\, B^m_\infty\} =
1- \mathbb{P} \big(\cup_{i=1}^m \{ |(Fg)_i| \geq c \tau \sqrt{\ln m}\} \big)
\geq 1- m^{1-c^2/2}.
\end{split}
\end{eqnarray}

Consequently,
\begin{eqnarray}
\begin{split}
\mathbb{P}\{H g \in c\tau \sqrt{\ln m}\, X\}  &= \mathbb{P}\{|AH g|_{\infty} \le c\tau \sqrt{\ln m}\}\\
& = \mathbb{P}\{|F g|_{\infty} \le c\tau \sqrt{\ln m}\} \ge 1 - m^{1-c^2/2}.
\end{split}
\end{eqnarray}

 Assuming $m\ge 2$, and letting $c = \sqrt{4-2\log_2 \mu_1}$, we obtain $\mathbb{P}\{H g \in c\tau \sqrt{\ln m}\, X\} \geq 1-\mu_1/2$.
Note that the function $\mu(g)$ is decreasing in $\|g\|$. Therefore,
\begin{eqnarray}\label{eqn:zk_bound1}
\begin{split}
\vol\Big( c \tau \sqrt{\ln m}\,X \cap B^k_2(r)\Big) & \ge \frac{1}{\mu_0} \mathbb{P} \Big\{Hg \in \Big(c\tau \sqrt{\ln m} \,X \cap B^k_2(r)\Big) \Big\}\\
& \ge \frac{1}{\mu_0} \Big( \mathbb{P} (Hg \in c\tau \sqrt{\ln m}\, X) + \mathbb{P} (Hg \in B^k_2(r)) - 1\Big)\\
&\ge \frac{1}{\mu_0} \Big(1 - \frac{\mu_1}{2} + \mu_1  - 1\Big)= \frac{\mu_1}{2\mu_0}\,.
\end{split}
\end{eqnarray}
Here, $B_2^k(r)$ is the $k$ dimensional $\ell_2$ ball in the subspace $H$. (Recall that $\dim(H) = k$).
\begin{fact}\label{fact:mu1}
Let $\mu_0 = 1/(2\pi)^{k/2}$, $r = \sqrt{2k\ln(1/(2c_*))}$, and $\mu_1 = \mathbb{P}(\|g\| \le r)$.
The followings hold true.

\noindent $(a)\,$ $\mu_1\ge \mu_0\,e^{-r^2/2} \vol(B^k_2(r))$.
\vspace{.4cm}

\noindent $(b)\,$ If $0 < c_* \leq 0.2$, then
\begin{eqnarray}
\mu_1 \ge 1 - 2c_*\sqrt{2e\ln\frac{1}{2c_*}} \ge 0.1\,.
\end{eqnarray}
\end{fact}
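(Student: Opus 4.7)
The plan is to handle parts (a) and (b) separately. Part (a) is essentially immediate from monotonicity of the Gaussian density along radial directions, while part (b) reduces to a Chernoff bound on a chi-squared random variable followed by a short calculus check.

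For part (a), I would observe that the density $\mu(g)=\mu_0 \exp(-\|g\|^2/2)$ is non-increasing in $\|g\|$, so on the ball $\{\|g\|\leq r\}$ it is bounded below by $\mu_0 e^{-r^2/2}$. Integrating this pointwise bound over $B^k_2(r)$ yields $\mu_1\geq \mu_0 e^{-r^2/2}\vol(B^k_2(r))$ with essentially no work. This is the entire argument.

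Part (b) is the technical part. Since $\|g\|^2$ is chi-squared with $k$ degrees of freedom, I would apply the usual moment generating function bound: for any $0<\lambda<1/2$,
\begin{equation*}
\mathbb{P}(\|g\|^2>r^2)\leq (1-2\lambda)^{-k/2} e^{-\lambda r^2}.
\end{equation*}
Choosing the optimizer $\lambda=\tfrac{1}{2}(1-k/r^2)$ (which lies in $(0,1/2)$ precisely when $r^2>k$, and $r^2=2k\ln(1/(2c_*))>k$ holds since $c_*\leq 0.2<1/(2\sqrt e)$) collapses the bound to $(r^2/k)^{k/2}\,e^{-(r^2-k)/2}$. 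Substituting $r^2=2k\ln(1/(2c_*))$ and simplifying gives
\begin{equation*}
\mathbb{P}(\|g\|^2>r^2)\leq \Bigl(2c_*\sqrt{2e\ln(1/(2c_*))}\Bigr)^k.
\end{equation*}
Writing $\rho(c_*)=2c_*\sqrt{2e\ln(1/(2c_*))}$, a direct evaluation at the boundary $c_*=0.2$ shows $\rho(0.2)<0.9<1$, so $\rho(c_*)^k\leq \rho(c_*)$ for every integer $k\geq 1$ (provided one has also verified $\rho(c_*)\leq \rho(0.2)$ for all $c_*\leq 0.2$). Consequently $\mu_1=1-\mathbb{P}(\|g\|^2>r^2)\geq 1-\rho(c_*)$, which is the first inequality of (b).

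To upgrade this to the numerical bound $\mu_1\geq 0.1$, I would establish monotonicity of $\rho$ on the relevant interval. A short derivative computation shows that $\rho'(c_*)$ has the same sign as $\ln(1/(2c_*))-1/2$, so $\rho$ is increasing on $(0,1/(2\sqrt e))$, which contains $(0,0.2]$. Therefore $\rho(c_*)\leq \rho(0.2)<0.9$, giving $\mu_1\geq 0.1$. The main obstacle here is nothing conceptual but simply keeping the constants honest: one must check that $r^2>k$ so that the Chernoff optimizer is admissible, and then verify the numerical inequality $\rho(0.2)<0.9$ to conclude both the non-triviality of the tail bound ($\rho<1$) and the $0.1$ cushion.
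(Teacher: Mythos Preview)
Your proposal is correct and follows essentially the same route as the paper: part (a) is the same pointwise lower bound on the Gaussian density over $B_2^k(r)$, and part (b) is the same Chernoff/MGF bound for the chi-squared variable with the same optimizer $\lambda=\tfrac12(1-k/r^2)$, yielding the identical expression $\bigl(2c_*\sqrt{2e\ln(1/(2c_*))}\bigr)^k$. Your write-up is in fact slightly more careful than the paper's, which simply asserts the last step ``follows from $c_*\le 0.2$, $k\ge 1$'' without spelling out the monotonicity of $\rho$ needed for the final numerical bound $\mu_1\ge 0.1$.
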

We refer to Appendix~\ref{app:mu1} for the proof of Fact~\ref{fact:mu1}.

Using Fact~\ref{fact:mu1} (part $(a)$) in Eq.~\eqref{eqn:zk_bound1}, we get
\begin{eqnarray}\label{eqn:zk_bound2}
\bigg\{\frac{\vol\big( c \tau \sqrt{\ln m}\,X \cap B^k_2(r)\big)}{\vol(B^k_2(r))} \bigg\}^{\frac{1}{k}}
\ge \frac{1}{2^{1/k}} e^{-r^2/2k} = \frac{2c_*}{2^{1/k}}\ge c_*\,.
\end{eqnarray}
Scaling the sets by factor $1/(c\tau \sqrt{\ln m})$ in the left hand side of Eq.~\eqref{eqn:zk_bound2}, and using the definition of approximation radius,
\begin{eqnarray}
z_{c_*,k}(X)\geq \frac{r}{c\tau \sqrt{\ln m}}  = c_2 \sqrt{\frac{k}{\ln m}} \cdot \frac{1}{d_{n-k}(X^\circ)}\,,
\end{eqnarray}
where $c_2 = (\sqrt{2}/c) \sqrt{\ln(1/(2c_*))}$. Using Fact~\ref{fact:mu1} (part $(b)$), 
\begin{eqnarray}
c = \sqrt{4 - 2 \log_2 \mu_1} \le 3.3\,,
\end{eqnarray} 
whence we obtain $c_2 \ge 0.4 \sqrt{\ln(1/(2c_*))}$. This concludes the proof.
\end{proof}

With all these preparations, we can now prove the main theorem.

\begin{proof}(Theorem~\ref{thm:linear})
As mentioned earlier, the lowerbound is implied by previous work. We
only show the upperbound. Recall that $d_k(X)$ is non-decreasing
in $k$. (see Eq.~\eqref{eqn:monotone_dk}). Let $k^\ast = \min \{ k\ge
1 | d_k(X)^2 \leq k \sigma^2 \}$. ($k^*$ exists since $d_n(X) =
0$). Consider the two cases below separately.
\smallskip\\

\noindent$\bullet$ $\mathbf{(k^* > 1) : }$ Invoking Eq.~\eqref{eqn:upperbound}, $R_T(X,\sigma) \le d_{k^*}(X)^2 + k^* \sigma^2$. By definition of $k^*$, $R_T(X,\sigma) \le 2 k^* \sigma^2$. Further,
\begin{eqnarray*}
\begin{split}
d_{k^*}(X)^2 + k^* \sigma^2 &\le d_{k^*-1}(X)^2 + \frac{k^*}{k^*-1}\,(k^*-1)\sigma^2\\ 
&\le d_{k^*-1}(X)^2 + 2 d_{k^*-1}(X)^2 = 3d_{k^*-1}(X)^2. 
\end{split}
\end{eqnarray*}
Hence, $R_T(X,\sigma) \le 3 \min \{d_{k^\ast-1}(X)^2, k^\ast\sigma^2\}$. On the other hand,
\begin{eqnarray}
\begin{split}
z_{c_*,(k^*-1)/2}(X) &\ge c_2 \sqrt{\frac{k^*-1}{2\ln m}} \cdot \frac{1}{d_{n-(k^*-1)/2}(X^\circ)}\\
&\ge \frac{c_2}{2c_1 \sqrt{\ln m}} d_{k^*-1}(X),
\end{split}
\end{eqnarray}
where the first inequality follows from Theorem~\ref{thm:approxradius} and the second one follows from Theorem~\ref{thm:dual}. Applying Theorem~\ref{thm:lowerbound},
\begin{eqnarray}
\begin{split}
R(X,\sigma)&\ge C c_*^2\min \Big\{z_{c_*,(k^*-1)/2}(X)^2, \frac{k^*-1}{2} \sigma^2\Big\}\\
& \ge C c_*^2\min\Big\{\frac{c_2^2}{4c_1^2 \ln m} d_{k^\ast - 1}(X)^2, \frac{ k^\ast - 1}{2} \sigma^2\Big\}\\
& \ge \frac{C c_*^2}{4\ln m} \min(c_2^2/c_1^2,1) \min\Big\{d_{k^\ast - 1}(X)^2, k^* \sigma^2\Big\}\\
&\ge \frac{C_1}{\ln m} R_T(X,\sigma),
\end{split}
\end{eqnarray}
for $C_1 = (C c_*^2/12) \min(c_2^2/ c_1^2,1)$. 
\smallskip\\

\noindent$\bullet$ $\mathbf{(k^*  = 1) : }$ Using Eq.~\eqref{eqn:upperbound},
\begin{eqnarray}
R_T(X,\sigma) \le \min \{d_0(X)^2, d_1(X)^2 + \sigma^2\} \le \min\{\r(X)^2, 2\sigma^2\},
\end{eqnarray}
where we used the assumption $k^* = 1$ in the final step. On the other hand, $X$ contains a segment $S$ with length $\r(X)$. Using the result of~\cite{dlm-90}, 
\begin{eqnarray}
R(X,\sigma) \ge R(S,\sigma) = \frac{\sigma^2 \cdot \r(X)^2}{\sigma^2+\r(X)^2} \ge \frac{1}{2} \min(\sigma^2,\r(X)^2).
\end{eqnarray}
Therefore, $R(X,\sigma)\ge (1/4) R_T(X,\sigma)$.

Combining both cases, we have
\begin{eqnarray}\label{eq:main_thm_proof}
\frac{R_T(X,\sigma)}{R(X,\sigma)} \le M_{c_*} \ln m\,,
\end{eqnarray}
where
\begin{eqnarray*}
M_{c_*} = \frac{1}{C_1} = \frac{12}{C c_*^2} \max (c_1^2/c_2^2, 1), \; C = 2.46 \cdot 10^{-4}\;, c_1 = 2/(\sqrt{2}-1)\;, c_2 = 0.4 \sqrt{\ln(1/(2c_*))}.
\end{eqnarray*}
Minimizing $M_{c_*}$ over $0 < c_*\le 0.2$, we obtain $c_* = 0.2$ with $M_{c_*} < 2 \cdot 10^{8}$.
\end{proof}

\begin{remark}
It is essential that $X$ is symmetric. Otherwise, we can take an
orthant of $B^n_1$ which has $O(n)$ faces and has large gap between
$R_T(X,\sigma)$ and $R(X,\sigma)$. 
\end{remark}

\section{Discussions}

\subsection{Applications to estimating Lipschitz functions}

The problem of estimating values of a Lipschitz function, at a set of
sampled points, from noisy measurements is discussed in the
introduction. Since the Lipschitz condition can be represented as
linear conditions, Theorem~\ref{thm:linear} is widely applicable to
such problems. For example, the function can be defined on any metric
space, the sampling points can be arbitrary set of points, and the
Lipschitz condition can be of higher order. As long as the
corresponding linear constraints is bounded by $n^{O(1)}$ for $n$
samples, the approximation factor is within a small factor of $O(\log
n)$ of the optimal.

\subsection{Smooth convex bodies}\label{sec:smooth}

In the above, we have shown that $\beta^{m,n}_\infty = O(\log m)$.
The celebrated Pinsker bound~\cite{pinsker-80} states that
$\beta^{m,n}_2 = O(1)$.  What about $\beta^{m,n}_p$ for other $p$'s?
By plugging $\sigma=1/\sqrt{n}$ in Theorem~3 in~\cite{dj-94}, we
have that for $1\leq p<2$, $\beta^{n,n}_p=\Omega((n/\log n)^{1-p/2})$.
So we will not be able to obtain a similar bound to
Theorem~\ref{thm:linear} when $p<2$.  On the other hand, we conjecture
that similar upperbound holds when $p\geq 2$.  
\begin{conjecture}
For any $p\ge 2$, there exists a constant $C = C(p)$, such that for
any $m,n \ge 2$, $\beta^{m,n}_p \leq C \log m$. 
\end{conjecture}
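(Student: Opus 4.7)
The plan is to mirror the three-step architecture that proved Theorem~\ref{thm:linear}: lower bound the minimax risk via the approximation radius, invoke the Kolmogorov-width duality, and compare against the truncated-series upper bound. Theorems~\ref{thm:lowerbound} and~\ref{thm:dual} are stated for arbitrary centrally symmetric convex bodies, so they transfer to $\F^{m,n}_p$ verbatim; the piece that must be reproved is a $p$-analogue of Theorem~\ref{thm:approxradius}. Concretely, I would try to show that for $X=\{x:|Ax|_p\leq 1\}\in\F^{m,n}_p$,
\begin{equation*}
z_{c_*,k}(X)\;\geq\;c(p)\sqrt{\frac{k}{\log m}}\cdot\frac{1}{d_{n-k}(X^\circ)},
\end{equation*}
since feeding such a bound into the same chain of inequalities used for Theorem~\ref{thm:linear} immediately yields $\beta^{m,n}_p\leq C(p)\log m$.

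\textbf{Technical core.} The geometric setup from the proof of Theorem~\ref{thm:approxradius} transfers unchanged: pick a $k$-dimensional $H$ realizing $d_{n-k}(X^\circ)=\tau$, set $F=AH$, and use $X^\circ=A^T B^m_q$ with $q=p/(p-1)\in[1,2]$ to conclude $\|F\|_{\ell_2\to\ell_p}\leq\tau$; in particular each row of $F$ has $\ell_2$-norm at most $\tau$ and each column has $\ell_p$-norm at most $\tau$. With $g\sim\normal(0,I_k)$, the event $Hg\in rX$ becomes $\|Fg\|_p\leq r$, so the whole argument reduces to proving a high-probability bound of the shape
\begin{equation*}
\prob\bigl(\|Fg\|_p\leq C(p)\,\tau\sqrt{\log m}\bigr)\;\geq\;1-\tfrac12\mu_1.
\end{equation*}
For $p=\infty$ this was trivial via coordinate-wise Gaussian tails and a union bound over the $m$ rows of $A$; for finite $p\geq 2$, this is precisely where the difficulty lies.

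\textbf{Main obstacle.} The two natural estimates implied by $\|F\|_{\ell_2\to\ell_p}\leq\tau$ are the Gaussian type-$2$ inequality applied to the columns of $F$ (using $T_2(\ell_p^m)\lesssim\sqrt{p}$), which gives $\E\|Fg\|_p\lesssim\sqrt{pk}\,\tau$, and the row-based moment identity $\E\|Fg\|_p^p=c_p\sum_i\|f_i\|_2^p\leq c_p m\tau^p$, which gives $\E\|Fg\|_p\lesssim m^{1/p}\tau$. Substituting the better of the two into the proof of Theorem~\ref{thm:approxradius} yields $z_{c_*,k}(X)\gtrsim \sqrt{k}/(\min(\sqrt{pk},m^{1/p})\,\tau)$, which only propagates to $\beta^{m,n}_p\lesssim\min(k^\ast,m^{2/p})$, far weaker than $\log m$. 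Closing the gap to the conjectured $\log m$ scaling is exactly where a new probabilistic input seems essential, and is the hard part I would focus on.

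\textbf{Candidate routes.} To attack that gap I would try three complementary strategies. First, write $\|Fg\|_p=\sup_{\|z\|_q\leq 1}\langle z,Fg\rangle$ and apply Talagrand's generic chaining to the Gaussian process indexed by $B^m_q$ in the intrinsic metric $d(z,z')=\|F^T(z-z')\|_2$; since $F^T B^m_q\subseteq\tau B^k_2$ and $B^m_q$ has bounded cotype-$2$ constant for $q\leq 2$, one may hope the resulting entropy integral is of order $C(p)\,\tau\sqrt{\log m}$. Second, approximate $B^m_q$ in $\ell_2$ by a polytope with $m^{O(1)}$ extreme points and try to reduce $X$ to an instance of $\F^{m^{O(1)},n}_\infty$ already covered by Theorem~\ref{thm:linear}; the delicate point is controlling the anisotropic inflation of $X$ that the polar dual of such an approximation induces. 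Third, interpolate between the Pinsker bound $\beta^{m,n}_2=O(1)$ and the $p=\infty$ bound, treating $p$ as an interpolation parameter, although interpolation at the level of minimax risks (rather than of norms) is far from routine.
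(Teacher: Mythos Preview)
The statement you are attempting to prove is a \emph{conjecture} in the paper, not a theorem; the paper offers no proof. Immediately after stating it, the authors record only the weaker bound
\[
\beta^{m,n}_p = O\bigl(\min(n^{1-2/p},\, m^{2/p}\log m)\bigr),
\]
obtained by the elementary observation $\beta(X)\leq d(X,Y)^2\beta(Y)$ applied with $Y=B^n_2$ (Pinsker) and $Y=B^n_\infty$ (Theorem~\ref{thm:linear}) respectively. So there is no ``paper's own proof'' to compare against.

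Your proposal is not a proof either, and you say so yourself: it is a research outline that correctly isolates the single missing ingredient. Your setup is sound --- the identification $X^\circ=A^TB^m_q$, the reduction to controlling $\|Fg\|_p$ for $F=AH$ with $\|F\|_{\ell_2\to\ell_p}\leq\tau$, and the two crude estimates (type-$2$ and the row-moment bound) are all correct. The resulting bound you extract, of order $\min(\sqrt{pk},m^{1/p})\,\tau$ on $\E\|Fg\|_p$, is essentially what feeds back into the paper's Corollary; in particular the $m^{1/p}\tau$ estimate, pushed through the machinery, reproduces the $m^{2/p}\log m$ term. So your partial analysis and the paper's Corollary land at the same place by different routes (yours via the approximation-radius pipeline, theirs via a one-line distance comparison).

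The genuine gap is exactly the one you flag: for the conjecture one would need $\E\|Fg\|_p\lesssim C(p)\,\tau\sqrt{\log m}$ under only the hypothesis $\|F\|_{\ell_2\to\ell_p}\leq\tau$, and none of your three candidate routes is known to deliver this. The chaining approach over $B^m_q$ with the $F^T$-induced metric does not obviously yield a $\sqrt{\log m}$ entropy integral without further structural input on $F$; the polytope-approximation route changes the body anisotropically, as you note; and interpolation of minimax risks is not a theorem. In short, your proposal is a reasonable plan of attack on an open problem, not a proof, and the paper does not claim otherwise.
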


Define the distance $d(X,Y)$ between two centrally symmetric convex
body $X,Y$ as the smallest $c$ such that there exists a uniformly scaled orthogonal 
transformation $F$ such that $FY \subseteq X \subseteq c FY$. We note that $d(\cdot,\cdot)$ is similar to but different from the classical Banach-Mazur distance in which $F$ is any linear transformation. 
and that $\log d(\cdot,\cdot)$ is a pseudometric (non-negative, symmetric, and with triangular inequality).
By straightforward arguments, $\beta(X)\leq d(X,Y)^2\beta(Y)$.  Since $d(B^n_p, B^n_2) = n^{1/2 - 1/p}$ and $d(B^n_p,
B^n_\infty) = n^{1/p}$, we have the following nontrivial bound.
\begin{corollary}
For $p\geq 2$, $\beta^{m,n}_p = O(\min(n^{1-2/p}, m^{2/p}\log m))$.
In particular, for $p\geq 2$, $\beta^{n,n}_p = O(\sqrt{n\log n})$.
\end{corollary}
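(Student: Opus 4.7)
The plan is to apply the distance inequality $\beta(X) \leq d(X,Y)^2\,\beta(Y)$ with two different reference bodies $Y$, and to combine the resulting estimates by taking their minimum. Write any $X \in \F^{m,n}_p$ as $X = \{x : |Ax|_p \leq 1\}$ for some $A \in \reals^{m\times n}$.

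For the bound $O(m^{2/p}\log m)$, I would compare $X$ against the polytope $Y_1 = \{x : |Ax|_\infty \leq 1\}$, which lies in $\F^{m,n}_\infty$. The elementary inequalities $\|z\|_\infty \leq \|z\|_p \leq m^{1/p}\|z\|_\infty$ in $\reals^m$, applied with $z = Ax$, yield $m^{-1/p} Y_1 \subseteq X \subseteq Y_1$, and therefore $d(X,Y_1) \leq m^{1/p}$. Theorem~\ref{thm:linear} gives $\beta(Y_1) = O(\log m)$, which combined with $d(X,Y_1)^2 \leq m^{2/p}$ produces the first term of the minimum.

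For the bound $O(n^{1-2/p})$, the plan is to compare $X$ against an ellipsoid $Y_2 \in \F^{\ast,n}_2$, for which Pinsker's theorem gives $\beta(Y_2) = O(1)$. The naive choice $Y_2 = \{x : |Ax|_2 \leq 1\}$ combined with $\|z\|_p \leq \|z\|_2 \leq m^{1/2-1/p}\|z\|_p$ in $\reals^m$ only yields $d(X,Y_2) \leq m^{1/2 - 1/p}$; replacing the ambient dimension $m$ with the intrinsic dimension $n$ requires exploiting that $Ax$ ranges over the $n$-dimensional subspace $\mathrm{range}(A)\subset \reals^m$. Here I would invoke the classical Banach-space-geometric fact that every $n$-dimensional subspace of $\ell_p^m$ (with $p \geq 2$) is $n^{1/2-1/p}$-equivalent to $\ell_2^n$ --- most conveniently via the Lewis-type ellipsoid of minimal volume --- and pull the resulting ellipsoid back to $\reals^n$ through $A$ to obtain the desired $Y_2$ with $d(X,Y_2) \leq n^{1/2-1/p}$.

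Taking the minimum of the two estimates yields $\beta^{m,n}_p = O(\min(n^{1-2/p}, m^{2/p}\log m))$. For the ``in particular'' statement, specializing $m = n$ and balancing the two exponents $1 - 2/p$ and $2/p$ leads to $p \approx 4$, where the common value is $O(\sqrt{n\log n})$. The main obstacle is the refined ellipsoidal distance bound: passing from the elementary factor $m^{1/2-1/p}$ to the intrinsic factor $n^{1/2-1/p}$ is the only step that requires a nontrivial ingredient beyond the pseudo-metric inequality $\beta(X) \leq d(X,Y)^2\beta(Y)$ itself.
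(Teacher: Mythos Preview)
Your proposal is correct and follows the same high-level strategy as the paper --- apply the inequality $\beta(X)\le d(X,Y)^2\beta(Y)$ with two choices of reference body $Y$ --- but it is actually more complete than what the paper writes. The paper's justification consists only of the two distances $d(B^n_p,B^n_2)=n^{1/2-1/p}$ and $d(B^n_p,B^n_\infty)=n^{1/p}$, which taken literally bound $\beta(B^n_p)$ alone, not $\beta(X)$ for a general $X=\{x:|Ax|_p\le 1\}\in\F^{m,n}_p$. Your two comparisons are exactly what is needed to extend the argument to the whole family: the pairing $X\leftrightarrow Y_1=\{x:|Ax|_\infty\le 1\}\in\F^{m,n}_\infty$ via the elementary $\ell_p/\ell_\infty$ inequality in $\reals^m$ yields the $m^{2/p}\log m$ term, and the pairing with a suitable ellipsoid via the Lewis-type fact that every $n$-dimensional subspace of $\ell_p^m$ (for $p\ge 2$) is $n^{1/2-1/p}$-Euclidean yields the $n^{1-2/p}$ term. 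You are right that this second step is the only nontrivial ingredient beyond the pseudometric inequality; the paper simply does not make it explicit.

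One minor remark on the ``in particular'' clause: rather than balancing exponents at $p\approx 4$, it is cleaner to note that the product of the two terms in the minimum (with $m=n$) equals $n\log n$, so their minimum is at most $\sqrt{n\log n}$ for every $p\ge 2$.
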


\subsection{Tightness of the approximation radius bound}

We have used the approximation radius to lower bound the minimax risk
of a convex body $X$. How tight is this bound? This paper has shown
that it is at least within $O(\log m)$ factor of the optimal upper
bound, and it is achieved by using the (rather limited) truncated series
estimators.

As discussed before, the approximation radius provides a
lower bound at least as good as using Bernstein width, which is known
to be asymptotically optimal for $B^n_p$ when $p\geq 2$. In this
section, we consider $B^n_p$ for $1\leq p<2$ and show that the lower
bound of using approximation radius is very close to the minimax upper
bound but does leave a small gap of factor of $\Theta((\log n)^{1-p/2})$.

We start by upper bounding $z_{c,k}(X)$.
For any linear $k$-dimensional subspace $H_k$, and $B^k_2(r) \subset H_k$, we have 
\begin{eqnarray}
B^k_2(r) \cap B^n_p \subseteq H_k \cap B^n_p.
\end{eqnarray}
As it is proved in~\cite{Meyer-88}, if $1\le p \le 2$, then $\vol(H_k \cap B^n_p) \le \vol(B_p^k)$. Using the formula for the volume of $k$-dimensional $\ell_p$ ball~\cite{wang-05}, we have
\begin{eqnarray}
\vol(B^k_p) = 2^k \frac{\Gamma^k(1+\frac{1}{p})}{\Gamma(\frac{k}{p}+1)} = \left(\frac{C_p}{k^{1/p}}\right)^k,
\end{eqnarray}
where $C_p$ is a constant that depends on $p$. Hence, for any $H_k$,
\begin{eqnarray}
\left(\frac{\vol(B^k_2(r)\cap B_p^n)}{\vol(B^k_2(r))}\right)^{1/k} \le \left(\frac{C_p^k}{C_2^k} \cdot \frac{k^{k/2}}{k^{k/p} r^k} \right)^{1/k} = \frac{C_p}{C_2} \frac{k^{1/2-1/p}}{r}.\label{eq:volume}
\end{eqnarray}

Therefore, $z_{c,k}(B^n_p) \leq \frac{C_p}{C_2\cdot
c}k^{1/2-1/p}$. For the lower bound of $z_{c,k}(B^n_p)$, choose $H_k$ to be one of the $k$-dimensional principal subspaces. Then $B^n_p\cap H_k = B^k_p\supset k^{1/2-1/p} B^k_2$.  Hence, $z_{c,k}(B^n_p)\geq \frac{1}{c} k^{1/2-1/p}$. So, $z_{c,k} = \Theta(k^{1/2-1/p}/c)$. 
 Apply the lower bound in
Theorem~\ref{thm:lowerbound} and we obtain $R(B^n_p,\sigma) = \Omega(\max_k
\min(k^{1-2/p}, k\sigma^2))$. When $\sigma\leq 1$, we choose $k\approx \sigma^{-p}$ and obtain a lower bound of $R(B^n_p,\sigma) = \Omega(\sigma^{2-p})$.  By~\cite{dj-94}, the optimal upper bound for $B^n_p$ is 
$R=\Theta(\sigma^{2-p}(2\log n \sigma^p)^{1-p/2})$ for
$(1/n)^{1/p} \ll \sigma \ll \sqrt{1/\log n}$.  Hence the
approximation radius bound leaves a gap of $\Theta((\log
n)^{1-p/2})$. Actually, the largest gap we know of is $\sqrt{\log n}$
by setting $p=1$ in the above bound.  
\commented{\note{Li: I don't have as much as faith to call it a conjecture.} We conjecture this is the
largest possible gap.
\begin{conjecture}
For any $0<c\leq 1$, there exists a constant $f(c)>0$, such that for any centrally symmetric convex $X\subseteq \reals^n$, $R(X,\sigma)
\leq \sqrt{\log n} f(c) \max_k \min(z_{c,k}(X)^2, k\sigma^2)$. 
\end{conjecture}}

\subsection{Computational complexity}

We have shown that the truncated series estimator is close to optimal
for symmetric convex polytopes.  For the family of ellipsoids $\F_2^{m,n}$, the optimal truncated
series estimator can be computed by using the singular value
decomposition.  However, computing the best truncated series
estimator, or the Kolmogorov width, for symmetric convex polytopes, is
a hard problem.  When $k=0$, $d_0(X)$ is the diameter of $X$, and it
is exactly the $\ell_2$-norm maximization problem considered
in~\cite{b-02}.  The problem is NP-hard. Further, it is shown in~\cite{b-02} that it is hard to approximate within any constant factor unless P=NP. 

On the other hand, by using semi-definite programming (SDP)
relaxation, one can compute $O(\sqrt{\log m})$ approximation of the
diameter~\cite{gw-95,nrt-99}, i.e. $d_0(X)$. However, it is not known
how to approximate $d_k(X)$ for $k>1$. \cite{vvyz-07} showed that if
the number of vertices of $X$ is $v$, then SDP gives an $O(\sqrt{\log
v})$ approximation for $d_k$. However, in our problem, the
number of vertices of a symmetric convex body could be exponential in
$n$.  So the technique in~\cite{vvyz-07} does not directly apply to
our problem.

\section{Conclusion}

In this paper, we show that the truncated series estimator can achieve
nearly optimal minimax risk for symmetric convex bodies defined by few
hyperplanes.  There are some outstanding open questions raised by this
work.

\begin{enumerate}
\item What is the best bound for $\beta^{m,n}_\infty$? Our work leaves a gap of $\Omega(\sqrt{\log m/\log\log m})$ and $O(\log m)$.

\item What is the best bound for $\beta^{m,n}_p$ for $p\geq 2$? We conjecture it is $O(\log m)$.

\item How tight is the approximation radius bound for lower bounding the minimax risk for convex bodies? For $\ell_1$ ball, it has a gap of $\Theta(\sqrt{\log n})$. This is the largest gap we know of. 

\item How to efficiently approximate the optimal truncated series estimator for any symmetric convex polytope?
\end{enumerate}

\appendix
\section{Proof of Proposition~\ref{pro:Fano_mod}}
\label{app:Fano_mod}
Consider any $M_\delta(X)$- packing in $X$. Let $M_\delta(X) =
\{x_1,\cdots,x_r\}$, and let $u$ be a random variable uniformly
distributed on the hypothesis set $\{x_1,\cdots,x_r\}$. Denote by
$M(y)$, the estimation of $x$ given the observation $y$. Define $w =
{\rm argmin}_{1\le j \le n} \|M(y) - x_j\|$. Since $\|x_j - x_j'\| \ge
\delta$, we have $w = j$, if $\|M(y) - x_j\| \le \delta/2$. Therefore,
\begin{eqnarray}\label{eqn:Fano1}
\begin{split}
\max_{1\le j \le n} \E_{p_{x_j}} \|M(y) - x_j\|^2 &\ge \left(\frac{\delta}{2}\right)^2 \max_{1\le j \le n} \mathbb{P}\{\|M(y) - x_j\| \ge \frac{\delta}{2} | u = j\}\\
&\ge \frac{\delta^2}{4r} \sum_{j=1}^r \mathbb{P}(w \ne j | u = j)\\
&\ge \left(\frac{\delta}{2}\right)^2 \mathbb{P}(w \ne u).
\end{split}
\end{eqnarray}

Let $h(p)$ be the entropy function defined as
\begin{eqnarray*}
h(p) = -p \log p -(1-p) \log (1-p), \quad \text{for } 0\le p \le 1.
\end{eqnarray*}

Denote by $H(u|w)$ the posterior entropy of $u$, given $w$, and denote by $I(u;w)$ the mutual information between $u$ and $w$ defined as
\begin{eqnarray*}
I(u;w) = H(u) - H(u|w) = \log r - H(u|w).
\end{eqnarray*}

Using Fano's inequality~(\cite{Cover-91}, p. 39),
\begin{eqnarray}\label{eqn:Fano2}
\begin{split}
\mathbb{P}(w \ne u) \log(r-1) &\ge H(u|w) - h(1/2)\\
 &= H(u) - I(u;w) - \log 2\\
 &\ge \log r  - I(u;w) - \log 2.
\end{split}
\end{eqnarray}

We recall the definition of K-L distance between two probability densities $p,q$ on a set $\Omega$, defined as~\cite{Cover-91},
\begin{eqnarray}\label{eqn:KL_def}
D_{KL}(p,q) = \int p \log\frac{p}{q} d\mu,
\end{eqnarray}
where $\mu$ is any measure on $\Omega$.

Using a property of mutual information, and its relation to K-L divergence (~\cite{Cover-91}, p. 30, 33), we have
\begin{eqnarray}\label{eqn:Fano3}
\begin{split}
I(u;w) = I(u;M(y)) \le I(u;y) &= \E_{u}\{D_{KL}(P(y|u), P(y))\}\\
&\le \max_{1\le j \le r} D_{KL}(P(y|x_j), P(y))
\end{split}
\end{eqnarray}

Let $N_{\ve}(X)$ be any $\ve$-net for $X$. Considering the uniform prior distribution on $N_{\ve}(X)$, we write, $P(y) = 1/|N_{\ve}(X)|\, \sum_{\tilde{x} \in N_{\ve}(X)} P(y|\tilde{x})$. Also, by definition of $\ve$-net, for any $x_j$, $1\le j\le r$, there exists $\tilde{x}_j \in N_{\ve}(X)$, with $\|x_j - \tilde{x}_j\| \le \ve$. Hence,
\begin{eqnarray}\label{eqn:Fano4}
\begin{split}
D_{KL}(P(y|x_j), P(y)) &= \E \Big\{ \log \frac{P(y|x_j)}{\frac{1}{|N_{\ve}(X)|} \sum_{\tilde{x} \in N_{\ve}(X)} P(y|\tilde{x})} \Big\}\\
&\le \E \Big\{ \log\frac{P(y|x_j) }{\frac{1}{|N_{\ve}(X)|} P(y|\tilde{x}_j)} \Big\}\\
 &= \log |N_{\ve}(X)| + D(P(y|x_j), P(y|\tilde{x}_j))
\end{split}
\end{eqnarray} 

Following the model~\eqref{eqn:model}, $y|x_j \sim \normal(x_j,\sigma^2\I)$, and $y|\tilde{x}_j \sim \normal(\tilde{x}_j,\sigma^2\I)$. Using the definition of K-L distance (Eq.~\eqref{eqn:KL_def}), after some simple algebraic manipulations, we have 
\begin{eqnarray}\label{eqn:KLnormal}
D(P(y|x_j), P(y|\tilde{x}_j)) = \frac{1}{2\sigma^2} \|x_j - \tilde{x}_j\|^2 \le \frac{\ve^2}{2\sigma^2}.
\end{eqnarray}

Combining Eqs.~\eqref{eqn:Fano3},\eqref{eqn:Fano4}, and~\eqref{eqn:KLnormal}, we obtain
\begin{eqnarray}\label{eqn:Fano5}
I(u;w) \le \log |N_{\ve}(X)| + \frac{\ve^2}{2\sigma^2}.
\end{eqnarray}

Using Eq.~\eqref{eqn:Fano1},~\eqref{eqn:Fano2}, and~\eqref{eqn:Fano5}, we obtain the desired result.
\section{Proof of Lemma~\ref{lem:net}}
\label{app:net}
Since $r \ge \r(X)$, $X \subseteq B_2(r)$. Hence, any $\ve$-net for
$B_2(r)$ is also an $\ve$-net for $X$. We begin by covering $B_2(r)$ with a finite family of balls of radius $\epsilon$. Choose the sequence of centers $p_1,p_2,\cdots$ in such a way that
\begin{eqnarray*}
p_{i+1} \notin \bigcup_{j=1}^i B_2(p_j,\epsilon).
\end{eqnarray*}
When this is no longer possible, the sequence is terminated. Now the set $P =\{p_i\}$ is an $\epsilon$-net for $B_2(r)$. Meanwhile, note that the smaller balls $B_2({p_i},\epsilon/2)$ are all disjoint (since no two of the $p_i$ are within distance $\epsilon$ of each other). In addition, $B_2(p_i,\epsilon/2)\subseteq B_2(r) \oplus B_2(\epsilon/2)$, where $\oplus$ denotes the Minkowski sum. Therefore,
\begin{eqnarray}
\label{eqn:eps-net1}
|P| \vol ( B_2(\ve/2) ) = \sum_{p_i \in P} \vol \Big(B_2(p_i,\ve/2) \Big) \leq \vol \Big(B_2(r) \oplus B_2(\epsilon/2) \Big).
\end{eqnarray}
Evidently, $B_2(\epsilon/2)\subseteq 1/2\, B_2(r)$, since $\epsilon \le r$. Hence, $B_2(r)\oplus B_2(\epsilon/2)\subseteq 3/2\, B_2(r)$, and $\vol \Big( B_2(r)\oplus B_2(\ve/2) \Big) \leq (3/2)^n \vol \Big( B_2(r) \Big)$. Using Eq.~\eqref{eqn:eps-net1}, we obtain
 \begin{equation*}
 |P| \le \frac{(3/2)^n \vol \Big(B_2(r) \Big)}{\vol (B_2(\epsilon/2) )} \le \left(\frac{3 r}{\epsilon} \right)^n.
 \end{equation*}

\section{Proof of Lemma~\ref{lem:pack}}
\label{app:pack}
Let $M_{\delta}(X)$ denote the maximum size $\delta$-packing of $X$. By maximality of $M_{\delta}(X)$, any other point in $X$ is within $\delta$ distance of one of the points in $M_{\delta}(X)$. Hence, 
\begin{equation*}
X \subseteq \bigcup_{p \in M_{\delta}(X)} B_2(p,\delta),
\end{equation*}
whence we obtain
\begin{equation}
\label{eqn:pack1}
|M_{\delta}(X)| \ge \frac{\vol(X)}{\vol (B_2(p,\delta) )}.
\end{equation}

\section{Proof of Proposition~\ref{pro:w}}
\label{app:w}
The proof is based in a crucial way on the following lemma proved in~\cite{g-95}.
\begin{lemma}\label{lem:dummy}
Let $u_1,\cdots,u_s \in \reals^n$, $\|u_i\| \le 1$. Define the set
\begin{eqnarray*}
E = \{(\delta_j)_{j=1}^s : \|\sum_{j=1}^s \delta_j u_j\|^2 \le 2s\}.
\end{eqnarray*}
Then, for every $\ve \in (0,1)$, there exists $\sigma \subseteq \{1,\cdots, s\}$ with $|\sigma| \ge (1- \ve)s$, such that
\begin{eqnarray*}
P_{\sigma}(E) \supseteq c \sqrt{\ve}[-1,1]^{\sigma},\quad c = \frac{\sqrt{2}-1}{\sqrt{2}},
\end{eqnarray*}
where the restriction map $P_{\sigma}$ is defined as $P_{\sigma}:(\delta_j)_{j=1}^s \to (\delta_j)_{j \in \sigma}$.
\end{lemma}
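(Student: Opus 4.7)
The plan is to reformulate the cube containment $P_\sigma(E)\supseteq c\sqrt{\epsilon}[-1,1]^\sigma$ as a quantitative spectral condition on a Schur-complement principal submatrix of the Gram matrix, and then extract $\sigma$ via a Bourgain--Tzafriri / Giannopoulos restricted-invertibility-type selection.

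Set up $T:\reals^s\to\reals^n$ by $Te_j=u_j$, with Gram matrix $G=T^TT$, so that $E=\{\delta:\|T\delta\|^2\le 2s\}$ and the hypothesis $\|u_j\|\le 1$ gives the trace bound $\operatorname{tr}(G)=\sum_j\|u_j\|^2\le s$. For any candidate $\sigma\subseteq\{1,\dots,s\}$ and $\eta\in\reals^\sigma$, the existence of a $\zeta\in\reals^{\sigma^c}$ with $(\eta,\zeta)\in E$ is equivalent to $\min_\zeta\|T_\sigma\eta+T_{\sigma^c}\zeta\|^2\le 2s$, and this minimum equals $\|QT_\sigma\eta\|^2$, where $Q$ is the orthogonal projection onto $\operatorname{span}(u_j:j\notin\sigma)^\perp$. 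Writing $M:=T_\sigma^T QT_\sigma$, the map $\eta\mapsto\eta^T M\eta$ is PSD-quadratic so its maximum on $[-1,1]^\sigma$ is attained at a vertex; thus the target cube inclusion is equivalent to
\[
c^2\epsilon\max_{\eta\in\{\pm 1\}^\sigma}\eta^T M\eta\le 2s.
\]
A direct computation identifies $M$ with the Schur complement $G/G_{\sigma^c\sigma^c}$ (equivalently $((G^{-1})_{\sigma\sigma})^{-1}$ when $G$ is invertible), and bounding $\eta^T M\eta\le\|\eta\|_2^2\|M\|_{\operatorname{op}}\le s\|M\|_{\operatorname{op}}$ on $\{\pm 1\}^\sigma$ reduces the problem to finding $\sigma$ with $|\sigma|\ge(1-\epsilon)s$ and $\|M\|_{\operatorname{op}}\le 2/(c^2\epsilon)$.

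The main obstacle lies in producing such $\sigma$. This is the content of a restricted-invertibility-type selection of Bourgain--Tzafriri / Giannopoulos~\cite{bt-87,g-95}: from a PSD matrix whose trace is on the order of its dimension, one can extract a principal subset of nearly full size on which the relevant spectral quantity (here, the operator norm of the Schur complement; equivalently, smallest eigenvalue of the restricted inverse) is controlled by the reciprocal of the discard fraction. Applied here one obtains $\sigma$ of size $\ge(1-\epsilon)s$ with $\|G/G_{\sigma^c\sigma^c}\|_{\operatorname{op}}=O(1/\epsilon)$. The proof of this selection is the sole deep input: it proceeds by an iterative elimination scheme, removing at each stage a coordinate whose exclusion damages the relevant eigenvalue least, and controlling the cumulative degradation after $\epsilon s$ steps through a convexity/duality identity on principal-submatrix eigenvalues. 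Everything outside of this selection is routine linear algebra and convexity.

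With the selected $\sigma$ in hand, the spectral bound on $\|M\|_{\operatorname{op}}$ delivers the equivalent inequality above, and hence $P_\sigma(E)\supseteq c\sqrt{\epsilon}[-1,1]^\sigma$. The constant $c=(\sqrt{2}-1)/\sqrt{2}$ is obtained by carefully tracking the two sources of slack in the chain: the factor $\sqrt{2}$ distinguishing ``$\|T\delta\|^2\le 2s$'' from the variance-type bound $\sum_j\|u_j\|^2\le s$, and the absolute constant emerging from the Giannopoulos selection step.
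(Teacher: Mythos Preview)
The paper does not prove this lemma at all: it is quoted verbatim from Giannopoulos~\cite{g-95} and used as a black box in the proof of Proposition~\ref{pro:w}. So there is no ``paper's own proof'' to compare against; both you and the paper ultimately defer to~\cite{g-95}.

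Your linear-algebraic reformulation is correct and useful. Writing $E$ as the sublevel set of $\delta\mapsto\delta^TG\delta$ and identifying $P_\sigma(E)$ with the sublevel set of the Schur complement $M=G/G_{\sigma^c\sigma^c}$ is exactly right, and the reduction of the cube inclusion to $\max_{\eta\in\{\pm1\}^\sigma}\eta^TM\eta\le 2s/(c^2\epsilon)$ is clean.

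Where I would push back is on the next step. You pass from the cube maximum to the bound $\eta^TM\eta\le s\|M\|_{\mathrm{op}}$ and then assert that a ``restricted-invertibility-type selection'' produces $\sigma$ with $|\sigma|\ge(1-\epsilon)s$ and $\|M\|_{\mathrm{op}}=O(1/\epsilon)$. Two issues. First, the Bourgain--Tzafriri principle as usually stated controls the \emph{smallest} singular value of a selected submatrix, not the operator norm of a Schur complement; it is not obvious that the statement you need is a direct corollary, and you do not supply the argument. Second, Giannopoulos's own proof of this lemma does not go through an operator-norm bound on $M$; his iteration works directly with the top eigenvector of the current (Schur-complement) matrix, removing the coordinate of largest weight in that eigenvector and tracking the drop in the top eigenvalue. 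The constant $(\sqrt{2}-1)/\sqrt{2}$ falls out of that specific iteration, and your ``two sources of slack'' accounting is too vague to recover it. In short, your reformulation is sound, but the selection step you invoke is precisely the content of~\cite{g-95}, and your sketch of it (``iterative elimination \ldots convexity/duality identity'') does not rise to a proof any more than the paper's bare citation does.
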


Since the set $V = \{v_1,\cdots,v_s\}$ is $\delta$-wide, there exist $y_1, \cdots, y_s \in \reals^n$, so that 
\begin{eqnarray}\label{eqn:y_property}
\langle v_i,y_j \rangle = 1_{\{ i = j\}}, \text{\quad and \quad} \|y_i\| \le \frac{1}{\delta}, \quad i,j = 1,\cdots,s.
\end{eqnarray}

Let $u_i = \delta y_i$. Applying Lemma~\ref{lem:dummy}, there exists a set $\sigma\subseteq \{1,\cdots, s\}$, with $|\sigma| \ge (1- \ve)s$, and $P_{\sigma}(E) \supseteq c \sqrt{\ve} [-1,1]^{\sigma}$. Hence we can find $(\delta_{j})_{j=1}^s \in E$, such that, $\delta_j = c \sqrt{\ve} \sign(\alpha_j)$, for $j \in \sigma$. Then,
\begin{eqnarray}\label{eqn:dummy2}
\begin{split}
\sum_{j \in \sigma} |\alpha_j| &= \langle \sum_{j \in \sigma} \alpha_j v_j, \sum_{i =1}^{s} \sign(\alpha_i) y_i \rangle\\
 &= \frac{1}{c\sqrt{\ve}} \langle \sum_{j \in \sigma} \alpha_j v_j, \sum_{i =1}^{s} \delta_i y_i \rangle\\
 &\le \frac{1}{c\sqrt{\ve}} \Big\| \sum_{j \in \sigma} \alpha_j v_j\Big\| \cdot \frac{1}{\delta} \Big\|\sum_{i=1}^s \delta_i u_i \Big\|\\
 &\le \frac{1}{c \,\delta} \sqrt{\frac{2s}{\ve}} \Big\| \sum_{j \in \sigma} \alpha_j v_j\Big\|,
\end{split}
\end{eqnarray}
where the first step follows form Eq.~\eqref{eqn:y_property}. Rearranging the terms in Eq.~\eqref{eqn:dummy2} implies the result.

\section{Proof of Fact~\ref{fact:mu1}}
\label{app:mu1}
\begin{proof}[Proof (Part ($a$))]
\begin{eqnarray}
\begin{split}
\mu_1 &= \mathbb{P}(\|g\| \le r)
= \frac{1}{(2\pi)^{s/2}} \int_{\|x\| \le r} e^{-x^2/2} dx\\
&\ge \mu_0 \int_{\|x\| \le r} e^{-r^2/2} dx
= \mu_0\,e^{-r^2/2} \vol(B^s_2(r)).
\end{split}
\end{eqnarray}
\end{proof}
\begin{proof}[Proof (Part ($b$))]
We will first upper bound $\mathbb{P}(\|g\| > r)$ using a Chernoff Bound. 
\begin{eqnarray}\label{mu1_eq1}
\mathbb{P}(\|g\| > r) = \mathbb{P}(e^{\lambda \sum_{i=1}^k g_i^2} > e^{\lambda r^2}) \le \frac{\mathbb{E}\{ e^{\lambda \sum_{i=1}^k g_i^2}\}}{e^{\lambda r^2}}.
\end{eqnarray}
Since $g_i$ are i.i.d. standard normal variables, it is easy to see that
\begin{eqnarray}\label{mu1_eq2}
\mathbb{E}\{ e^{\lambda \sum_{i=1}^k g_i^2}\} = (\mathbb{E}\{e^{\lambda g_1^2}\})^k = \left( \frac{1}{\sqrt{1-2\lambda}}\right)^k.
\end{eqnarray}
Using Eq.~\eqref{mu1_eq2} in Eq.~\eqref{mu1_eq1} and substituting for $r$, we obtain
\begin{eqnarray}\label{mu1_eq3}
\mathbb{P}(\|g\| > r) \le \left( \frac{1}{\sqrt{1-2\lambda}} e^{-2\lambda \ln\frac{1}{2c_*}}\right)^k.
\end{eqnarray}
Minimizing the right hand side over $\lambda$ gives $\lambda  = 1/2(1+1/(2\ln(2c_*)))$. Notice that $\lambda > 0$, for $0 < c_* < 0.2$. Substituting for $\lambda$ in Eq.~\eqref{mu1_eq3} gives
\begin{eqnarray}\label{mu1_eq4}
\mathbb{P}(\|g\| > r) \le \left(2c_* \sqrt{2e \ln\frac{1}{2c_*}}\right)^k \le 2c_* \sqrt{2e \ln\frac{1}{2c_*}},
\end{eqnarray}
where the last step follows from $c_* \le 0.2$, and $k \ge 1$. Now, $\mu_1 = 1 - \mathbb{P}(\|g\| > r)$. The result follows.
\end{proof}
\bibliographystyle{abbrv}
\bibliography{minimax}
\end{document}